\documentclass[11pt]{article}
\usepackage[utf8]{inputenc}  
\usepackage{amsmath,amssymb,amsfonts,amsthm,bbm}
\usepackage[british]{babel}
\usepackage{tikz,animate,media9}						%
\usepackage{enumerate,enumitem,mdframed}

\usepackage{nicefrac}
\usepackage{crossreftools,booktabs}

\makeatletter
\newcommand{\optionaldesc}[2]{%
  \phantomsection
  #1\protected@edef\@currentlabel{#1}\label{#2}%
}
\makeatother

\usepackage[mono=false]{libertine}
\usepackage[T1]{fontenc}
\usepackage[cmintegrals,libertine]{newtxmath}
\usepackage[cal=euler, calscaled=.95, frak=euler]{mathalfa}
\useosf
\usepackage[a4paper,vmargin={2cm,2cm},hmargin={2.25cm,2.25cm}]{geometry}
\usepackage[font=sf, labelfont={sf,bf}, margin=1cm]{caption}
\linespread{1.2}
\usepackage{comment}

\definecolor{NBrown}{HTML}{66220C}
\definecolor{NAqua}{HTML}{00698C}
\definecolor{ForestGreen}{HTML}{228b22}

\usepackage{hyperref}
\usepackage[capitalise]{cleveref}

\topmargin 0in
\oddsidemargin .01in
\textwidth 6.5in
\textheight 9in
\evensidemargin 1in
\addtolength{\voffset}{-.6in}
\addtolength{\textheight}{0.22in}
\parskip \medskipamount

\usepackage{mleftright}
\mleftright

\newtheorem{theorem}{Theorem}[section]
\newtheorem{lemma}[theorem]{Lemma}
\newtheorem{question}[theorem]{Question}
\newtheorem{conjecture}[theorem]{Conjecture}

\newtheorem{corollary}[theorem]{Corollary}

\newtheorem{claim}[theorem]{Claim}

\newtheorem{definition}[theorem]{Definition}

\newcommand{\pr}[1]{\mathbb{P}\!\left(#1\right)}

\newcommand{\prcond}[3]{\mathbb{P}_{#3}\!\left(#1\;\middle\vert\;#2\right)}

\newcommand{\edgebdy}{\partial_E}   %

\DeclareMathOperator{\WUSF}{WUSF}

\renewcommand{\epsilon}{\varepsilon}

\newcommand{\ZZ}{\mathbb{Z}}

\newcommand{\cF}{\mathcal{F}}

\newcommand{\eps}{\varepsilon}

\newcommand{\perco}{\varphi}
\newcommand{\percok}{\perco_k}

\newcommand{\Reff}{R_{\mathrm{eff}}}

\usepackage{tikz}

\usepackage{graphicx}
\usepackage{caption,subcaption}%

\makeatletter
\renewcommand\@dotsep{10000}
\makeatother
\setcounter{secnumdepth}{3}
\setcounter{tocdepth}{1}
\begin{document}

\title{The union of independent USFs on $\ZZ^d$ is transient}

\author{
Eleanor Archer
\thanks{Universit\'e Paris Nanterre, France. \textsf{earcher@parisnanterre.fr}
} \qquad  
Asaf Nachmias 
\thanks{Tel Aviv University, Israel. \textsf{asafnach@tauex.tau.ac.il}
} 
\qquad  
Matan Shalev
\thanks{Tel Aviv University, Israel. \textsf{matanshalev@mail.tau.ac.il}
}
 \qquad  
Pengfei Tang
\thanks{Tianjin University, China. \textsf{pengfei\_tang@tju.edu.cn}
} 
}

\date{\today}

	\maketitle
	\begin{abstract} 
      We show that the union of two or more independent uniform spanning forests (USF) on $\ZZ^d$ with $d\geq 3$ almost surely forms a connected transient graph. In fact, this also holds when taking the union of a deterministic everywhere percolating set and an independent $\eps$-Bernoulli percolation on a single USF sample. %
	\end{abstract}

\section{Introduction}
Given a finite connected graph $G$, the \textit{uniform spanning tree} (UST) on $G$ is a tree drawn uniformly at random from the finite set of spanning trees of $G$. The \textit{wired uniform spanning forest} (WUSF) and \textit{free uniform spanning forest} (FUSF) on an infinite, connected and locally finite graph $G$ are the weak limits of USTs on an exhaustion of $G$ with wired and free boundary conditions respectively. It was shown by Pemantle \cite{Pemantle1991} that these do not depend on the choice of the exhaustion. Moreover, the two measures WUSF and FUSF coincide on $\ZZ^d$. We refer to \cite[Chapter 4 and 10]{Lyons_Peres2016book} for backgrounds on USTs and USFs.

A fundamental result of Morris \cite{Morris2003} states that every component of the WUSF is almost surely recurrent on any graph. A natural question is whether this and other remarkable properties of USFs are stable under various perturbations. In this paper we will be concerned with taking unions of independent WUSFs. We say that a subgraph $\Lambda \subseteq G$ is \textbf{everywhere percolating} if every $x\in G$ is contained in an infinite connected component of $\Lambda$. Also, for any $\eps>0$ we write $\eps$-$\WUSF$ for $\eps$-Bernoulli percolation on the $\WUSF$, that is, conditioned on the $\WUSF$, independently retain each of its edges with probability $\eps$ or erase it otherwise. 

\begin{theorem}\label{thm: main result}
  Let $d\geq 3$ and let $\Lambda$ be an everywhere percolating subgraph of $\ZZ^d$. Then,	for any $\eps>0$ the union of $\Lambda$ and $\eps$-$\WUSF$ on $\ZZ^d$ is almost surely connected and transient. %
\end{theorem}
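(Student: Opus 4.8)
The plan is to reduce to a single small parameter and then build a transient subgraph by a multi-scale construction. Since $\eps$-$\WUSF$ admits a monotone coupling in $\eps$, and adding edges preserves connectivity and (by Rayleigh monotonicity) transience, it suffices to prove the theorem for all sufficiently small $\eps>0$; fix such an $\eps$, write $\mathfrak{F}_\eps$ for the $\eps$-$\WUSF$ and $G=\Lambda\cup\mathfrak{F}_\eps$. This also yields the statement for unions of two or more independent $\WUSF$s, by conditioning on one of them, which is almost surely everywhere percolating since all $\WUSF$ components on $\ZZ^d$ are infinite. As $\Lambda$ is everywhere percolating, every component of $G$ is infinite, so the theorem reduces to: (a) $\Reff(\rho\leftrightarrow\infty)<\infty$ in $G$ for a fixed $\rho$, and (b) $G$ has a single component. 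The conceptual point is that by Morris's theorem every component of $\mathfrak{F}_\eps$ is recurrent, so the transience of $G$ can only come from the interaction of $\Lambda$ and $\mathfrak{F}_\eps$: the $\eps$-$\WUSF$ will supply long, branching tree ``strands'' at every scale, while $\Lambda$ will be used both to patch the gaps that $\eps$-thinning opens in loop-erased paths (retained pieces are $O(1)$ apart in expectation, for $\eps$ fixed) and to provide further strands, since an infinite $\Lambda$-path emanates from every vertex.

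The heart of the argument is a recursive multi-scale construction. Fix geometrically growing scales $R_k\asymp 2^k$. I would build random connected subgraphs $T^{(k)}\subseteq G$ rooted at $\rho$, with $T^{(k)}\subseteq B_{R_k}$ and exiting $\partial B_{R_k}$ through $N_k$ essentially disjoint strands, with $N_k\to\infty$ fast enough (quantified below). The inductive step attaches, to the endpoints on $\partial B_{R_{k-1}}$ of the $N_{k-1}$ strands of $T^{(k-1)}$, further strands reaching $\partial B_{R_k}$; each new strand concatenates pieces of loop-erased-walk paths that survive the $\eps$-thinning, short $\Lambda$-paths patching the gaps, and — where extra strands are needed — fresh infinite $\Lambda$-paths branching off already-chosen vertices. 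Distinct strands are built in well-separated annular regions with buffer zones, so that, running Wilson's algorithm region by region, the relevant portions of $\mathfrak{F}_\eps$ are conditionally near-independent; the needed $\WUSF$-component geometry at scale $R_k$ is, for large $k$, controlled by continuum random tree ($\CRT$-type) estimates. At each level one isolates a good event — the required loop-erased connections exist, the regions are suitably independent, the $\Lambda$-patches are short, the strands are disjoint — whose complement has probability summable in $k$; by Borel--Cantelli the whole tower $(T^{(k)})_{k\ge 0}$ is realised inside $G$ almost surely. The deterministic nature of $\Lambda$ is harmless here: the $\WUSF$ inputs are $\ZZ^d$-translation covariant, and $\Lambda$ is only ever invoked to supply patches and strands, which it does at every vertex by hypothesis.

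Restricting the unit conductances of $\ZZ^d$ to $G$, the $\asymp N_k$ new strands at level $k$ each have length $\asymp R_k$ and are, past the shared inner structure (which carries low resistance by the inductive hypothesis), essentially in parallel, so the series and parallel laws yield
\begin{equation*}
\Reff\bigl(\rho\leftrightarrow\partial B_{R_k}\text{ in }T^{(k)}\bigr)-\Reff\bigl(\rho\leftrightarrow\partial B_{R_{k-1}}\text{ in }T^{(k-1)}\bigr)\ \le\ C\,\frac{R_k}{N_k} .
\end{equation*}
Arranging $N_k\gtrsim R_k\,k^2$, which the construction can deliver, makes these increments summable, so $\sup_k\Reff\bigl(\rho\leftrightarrow\partial B_{R_k}\text{ in }T^{(k)}\bigr)<\infty$; hence $\bigcup_kT^{(k)}$ is a transient subgraph of $G$, proving (a). For (b) one argues separately that $G$ has a single cluster: any two infinite $\Lambda$-components meet every large sphere about $\rho$, and a second-moment argument through Wilson's algorithm shows that at infinitely many nearly independent scales the $\eps$-$\WUSF$, together with the $\Lambda$-structure present there, interlinks all the macroscopic pieces; since $G$ has no finite component, this forces connectivity.

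The main obstacle is making the construction and its estimates close. One needs $\WUSF$ and loop-erased-walk connectivity and near-independence bounds strong enough (i) to locate, at every scale and with overwhelming probability, the required number $N_k$ of strand-extensions that are mutually disjoint and disjoint from the previously built structure — despite the long-range positive correlations of the spanning forest, which force the use of spatially separated regions, buffer zones and the domain-Markov property of Wilson's algorithm, with considerable bookkeeping — and (ii) to ensure that the $\eps$-thinned loop-erased pieces, once patched by short $\Lambda$-paths, still contribute length $\asymp R_k$ per strand, so that the branching $N_k$ genuinely outpaces the series growth $R_k$ and the resistance increments are summable. Since $\Lambda$ ranges over all everywhere-percolating subgraphs — from sparse bundles of parallel lines to connected spanning trees of $\ZZ^d$ — the argument must also be robust to how the strand structure is split between $\mathfrak{F}_\eps$ and $\Lambda$. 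Everything downstream is an application of standard electrical-network inequalities.
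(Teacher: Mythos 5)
There is a genuine gap at the core of your construction: you rely on $\Lambda$ to supply \emph{short} paths that patch the gaps left by the $\eps$-thinning of loop-erased/WUSF paths (``retained pieces are $O(1)$ apart \dots short $\Lambda$-paths patching the gaps''). But ``everywhere percolating'' only guarantees that every vertex lies in \emph{some} infinite cluster of $\Lambda$; it gives no local connectivity whatsoever. For instance, $\Lambda$ may consist of all lines parallel to $\vec{e_1}$: two adjacent vertices on different lines then lie in different $\Lambda$-components, so a gap in a thinned strand generally cannot be patched by any $\Lambda$-path, short or long. Since for fixed $\eps<1$ the components of the $\eps$-$\WUSF$ are a.s.\ finite (WUSF components are recurrent trees, so Bernoulli thinning shatters them), without a working patching mechanism your strands of length $\asymp R_k$ simply do not exist, and the multi-scale resistance bound built on them collapses. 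A second unproven ingredient is the asserted ``conditional near-independence'' of the forest in well-separated regions: the WUSF has long-range correlations and Wilson's algorithm does not localize, so buffers alone do not give this; and the connectivity claim (b) is only sketched.

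The paper's proof is organized precisely to avoid both problems, and you would need substitutes for its two devices. First, independence is manufactured rather than assumed: by Kirchhoff's formula, the spatial Markov property and Nash--Williams, revealing an acyclic family of edges one at a time shows each is in the WUSF with conditional probability at least $1/2d$, so the $\eps$-$\WUSF$ stochastically dominates a genuine product measure, the box percolation $\percok(\eps/2d)$ (one uniformly chosen edge per $(2k)$-box, opened independently). Second, instead of building strands from the forest, the roles are reversed: any union of $\Lambda$-clusters crossing an annulus has edge boundary at least the annulus width, hence meets many distinct boxes, and a Benjamini--Tassion style \emph{spatial} sprinkling over $4d$ disjoint sub-annuli merges all clusters inside $B_{2n}$ with probability $1-C\exp(-c\sqrt{n})$, uniformly in $\Lambda$. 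Transience then comes from renormalization rather than a hand-built transient tree: the resulting block field dominates supercritical site percolation (Liggett--Schonmann--Stacey), whose infinite cluster is transient by Grimmett--Kesten--Zhang, and rough equivalence plus Rayleigh monotonicity transfer this to $\Lambda\cup\percok(\eps)$. Any repair of your approach needs (i) a domination or decoupling device replacing the unjustified near-independence, and (ii) an argument that uses only the largeness of the boundaries of $\Lambda$-clusters, since that is all the hypothesis on $\Lambda$ actually provides.
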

Benjamini and Tassion \cite{BenTass} proved that this result holds when replacing the $\eps$-$\WUSF$ above with $\eps$-Bernoulli percolation. Since the $\eps$-Bernoulli percolation can be trivially coupled to contain an $\eps$-$\WUSF$, \cref{thm: main result} is strictly stronger. The proof is a ``spatial'' version of the argument in \cite{BenTass}; see \cref{sec:main_proof}. 

The above theorem applies when $\Lambda$ itself is an independent WUSF and $\eps=1$, giving the following answer to a question posed to us by Peleg Michaeli. 
\begin{corollary} \label{cor:main}
  The union of two independent samples of the $\WUSF$ on $\ZZ^d$ with $d\geq 3$ is almost surely connected and transient.
\end{corollary}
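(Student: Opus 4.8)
The plan is to obtain \cref{cor:main} as the special case of \cref{thm: main result} in which $\Lambda$ is itself a $\WUSF$ and $\eps = 1$. The only genuinely new input required is the standard observation that a $\WUSF$ sample on $\ZZ^d$ with $d \geq 3$ is almost surely everywhere percolating. To see this, generate the $\WUSF$ by Wilson's algorithm rooted at infinity: the component of any fixed vertex $x$ contains the loop-erasure of a simple random walk started at $x$, and since $\ZZ^d$ is transient for $d \geq 3$ this walk tends to infinity almost surely, so its loop-erasure is an infinite self-avoiding path. Hence $x$ lies in an infinite component of the $\WUSF$; as $\ZZ^d$ is countable and $x$ was arbitrary, a union bound shows that almost surely \emph{every} vertex lies in an infinite component, i.e.\ the $\WUSF$ is everywhere percolating.

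Now let $\mathfrak{F}_1$ and $\mathfrak{F}_2$ be two independent $\WUSF$ samples on $\ZZ^d$. Conditionally on $\mathfrak{F}_1$, the configuration $\mathfrak{F}_2$ is still an (independent) $\WUSF$, which coincides with $1$-$\WUSF$ since retaining every edge with probability $1$ changes nothing. On the probability-one event that $\mathfrak{F}_1$ is everywhere percolating, \cref{thm: main result} applied with $\Lambda = \mathfrak{F}_1$ (a fixed everywhere percolating subgraph) and $\eps = 1$ shows that, conditionally almost surely, $\mathfrak{F}_1 \cup \mathfrak{F}_2$ is connected and transient. Taking expectations over $\mathfrak{F}_1$ removes the conditioning and proves the corollary.

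I expect no real obstacle beyond \cref{thm: main result} itself. The only subtlety is purely formal: \cref{thm: main result} is stated for a \emph{deterministic} set $\Lambda$, so one cannot literally substitute the random forest $\mathfrak{F}_1$; the conditioning argument above circumvents this, and works precisely because $\mathfrak{F}_2$ is independent of $\mathfrak{F}_1$, so that conditionally on $\mathfrak{F}_1$ the law of $\mathfrak{F}_2$ is unchanged.
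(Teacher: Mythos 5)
Your proposal is correct and is essentially the paper's own argument: the paper deduces \cref{cor:main} by applying \cref{thm: main result} with $\Lambda$ an independent $\WUSF$ sample and $\eps=1$, exactly as you do. The two ingredients you spell out explicitly (that the $\WUSF$ on $\ZZ^d$, $d\geq 3$, is almost surely everywhere percolating, and the conditioning-on-$\mathfrak{F}_1$ step to pass from deterministic to random $\Lambda$) are precisely the standard facts the paper leaves implicit, and both are handled correctly.
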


Unions of independent spanning trees have origins in the computer science literature, where the union of $k$ independent uniform spanning trees is called a \textit{$k$-splicer} (see \cite{FGRV2014}). These were the first efficient constructions of \textit{sparsifiers} -- sparse graphs of an ambient graph which well approximate its spectrum. It is therefore of interest to understand what other properties are shared by the ambient graph and unions of WUSFs. 
Since the USF is a \emph{critical} statistical physics model, the union of independent $\WUSF$ and $\eps$-$\WUSF$ should be \emph{supercritical} and hence ``inherit'' the structure of the ambient graph. \cref{thm: main result} shows this is the case in $\ZZ^d$. 

It is relatively easy to show that when $G$ is a transitive unimodular nonamenable graph (see \cite{Lyons_Peres2016book} for definitions) almost surely each component of the union of the $\WUSF$ with an independent $\eps$-$\WUSF$ is transient. Indeed, one can readily repeat the proof of \cite[Theorem 13.7]{BLPS2001} to show that when $\eps>0$ is small enough, this union, denoted here by $W$, has infinitely many components, and each has infinitely many ends. Therefore by Lemma 8.35 in \cite{Lyons_Peres2016book} there is an invariant random subforest $\cF \subset W$ such that almost surely each infinite cluster $K$ of $W$ contains a tree of $\cF$ with infinitely many ends. By Corollary 8.20 in \cite{Lyons_Peres2016book}, any such tree has $p_c<1$ and in particular it is transient by Theorem 3.5 in \cite{Lyons_Peres2016book}. By Rayleigh's monotonicity principle, each connected component of $W$ is also transient. We conjecture that this behavior holds in general. %

\begin{conjecture}\label{main_conjecture}
On any transitive transient graph and for any $\eps>0$, every connected component of the union of the $\WUSF$ and an independent $\eps$-$\WUSF$ is almost surely transient.
\end{conjecture}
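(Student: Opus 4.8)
\textbf{Proof proposal for \cref{main_conjecture}.}
The plan is to prove the conjecture via the same ``invariant subforest with $p_c<1$'' strategy that works in the nonamenable transitive case, but now using transience of $G$ (rather than nonamenability) as the source of robustness. Let $G$ be a transitive transient graph and let $W$ denote the union of the $\WUSF$ with an independent $\eps$-$\WUSF$. The whole configuration $(G,W)$ together with the extra randomness is unimodular in the sense of mass-transport (it is a factor of an i.i.d.\ labelling of a transitive graph, hence a unimodular random rooted graph once we add an independent root). By transience of $G$ and Rayleigh monotonicity it suffices to exhibit, inside each infinite component $K$ of $W$, an invariant random subgraph that is transient; and by Theorem~3.5 in \cite{Lyons_Peres2016book} a tree with $p_c<1$ is transient, so it is enough to find an invariant subforest $\cF\subseteq W$ such that a.s.\ every infinite component of $W$ contains a component of $\cF$ with $p_c<1$.

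The natural candidate for $\cF$ is built from the $\WUSF$ part alone: take the $\WUSF$ $\T$ on $G$ and delete the edges removed by the $\eps$-thinning applied to the \emph{second} sample --- equivalently, $\cF=\T\cap W$, which is $\T$ with an independent $\eps'$-retention on \emph{some} of its edges only where the second forest fails to cover them. Since deleting a Bernoulli set of edges from a forest is exactly $p$-Bernoulli percolation on that forest, the components of $\cF$ are distributed as percolation clusters on $\WUSF$-components. So the heart of the matter is: \emph{on a transitive transient graph, after retaining each $\WUSF$ edge independently with probability $1-\delta$ for $\delta$ small, a.s.\ some resulting cluster has $p_c<1$ (indeed, is infinite and transient).} First I would use Wilson's algorithm to give quantitative control on $\WUSF$ components in the transient regime: transience of $G$ forces loop-erased random walks to be ``heavy'' --- one should be able to show (perhaps under a mild hypothesis such as a uniform lower bound on return probabilities, or exploiting transitivity directly) that $\WUSF$ components a.s.\ have exponential volume growth, or at least satisfy an isoperimetric-type inequality strong enough that a small amount of percolation preserves an infinite transient piece. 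Transitivity and unimodularity then let me promote ``some cluster is infinite'' to ``every infinite $W$-component contains such a piece'' by a mass-transport/ergodicity argument together with insertion tolerance of the $\eps$-$\WUSF$ layer (Corollary 8.20 and Lemma 8.35 in \cite{Lyons_Peres2016book}).

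There are two genuine obstacles. The first, and the main one, is that on a general transitive transient graph the $\WUSF$ component of a fixed vertex need not have $p_c<1$: amenable transitive transient examples (e.g.\ $\ZZ^3$) have $\WUSF$-components that are \emph{recurrent} one-ended trees, and it is false that percolation on them leaves an infinite cluster. So the $\cF=\T\cap W$ approach as stated cannot work verbatim on $\ZZ^3$ --- there one genuinely needs the $\Lambda$ (or the second full $\WUSF$) to help, which is exactly the content of \cref{thm: main result}. The correct plan is therefore a dichotomy: if $G$ is nonamenable, the argument sketched before \cref{main_conjecture} already applies; if $G$ is amenable and transient, one must instead combine the two forests, using that the \emph{union} of two independent $\WUSF$-components, or of a $\WUSF$-component with an $\eps$-$\WUSF$, has a.s.\ infinitely many ends (one should prove this from Wilson's algorithm: starting walks from distinct far-apart vertices produce loop-erased paths that are transient and escape each other, so even the intersection pattern of the two forests yields infinitely many ends in the union) and then invoke Corollary 8.20 plus Lemma 8.35 again. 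Executing the amenable case at this level of generality --- without the heat-kernel and capacity estimates that make $\ZZ^d$ tractable --- is the real difficulty, and I expect it to require either a clever invariant comparison with a ``spatially spread out'' percolation (in the spirit of the proof of \cref{thm: main result}) or a new structural result showing that the union of two independent $\WUSF$s on any transient transitive graph contains an invariant subtree with $p_c<1$. The second, more technical obstacle is ensuring all the $\WUSF$/percolation operations stay inside the class of unimodular random graphs so that mass transport and the ends-classification lemmas (8.35, 8.20, 13.1 in \cite{Lyons_Peres2016book}) are legitimately available; this is routine but must be set up carefully since $G$ is only assumed transitive, not unimodular transitive, and one may first want to reduce to the unimodular-transitive case.
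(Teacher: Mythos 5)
First, note that \cref{main_conjecture} is stated in the paper as an \emph{open conjecture}: the paper proves it only for $\ZZ^d$ (\cref{thm: main result}, via renormalization and \cite{GKZ1993}) and sketches the nonamenable unimodular case in the introduction. There is therefore no paper proof to compare against, and your proposal is, by your own admission, not a proof but a problem analysis. Much of that analysis is accurate: you correctly recall the nonamenable argument (infinitely many ends, then Lemma 8.35 and Corollary 8.20 of \cite{Lyons_Peres2016book}), and you correctly identify the fundamental obstruction in the amenable case, namely that $\WUSF$ components of an amenable transitive transient graph are one-ended recurrent trees, so percolating a single $\WUSF$ sample can never produce a cluster with $p_c<1$.

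However, two points in your plan are genuinely broken, not merely incomplete. (i) Your candidate subforest $\cF=\T\cap W$ is vacuous: the first $\WUSF$ sample $\T$ is contained in $W$ in its entirety (only the \emph{second} sample is thinned), so $\cF=\T$, which is a forest of recurrent trees by Morris's theorem \cite{Morris2003}; no thinning or percolation analysis of it can help. (ii) More importantly, your fallback for the amenable case --- prove that the union of the two forests has components with infinitely many ends and then invoke Corollary 8.20 plus Lemma 8.35 --- is provably impossible: the union is an invariant percolation, and on an amenable transitive unimodular graph every component of an invariant percolation has at most two ends almost surely (this is the Burton--Keane-type result of \cite{BLPS2001}; see also \cite[Chapter 8]{Lyons_Peres2016book}). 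So the ``infinitely many ends'' machinery is structurally unavailable precisely in the case that is open. This is why the paper's argument for $\ZZ^d$ takes an entirely different, quantitative route (sprinkling over disjoint annuli, domination of a supercritical block percolation, and transience of supercritical clusters via \cite{GKZ1993}), and why the general amenable transitive case remains a conjecture rather than something reachable by the ends-based strategy you propose.
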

 
It is not hard to directly argue that on any bounded degree graph and any $p<1$ there exists an integer $k\geq 1$ such that the union of $k$ independent $\WUSF$s dominates $p$-Bernoulli percolation on the graph (one can also use \cite{LSS1997}). Hence by \cite{Hutch23} we deduce that for any transitive transient graph there exists an integer $k$ such that almost surely each component of the union of $k$ independent $\WUSF$s is transient and hence \cref{main_conjecture} asks whether the same holds for ``$k=1+\eps$''. It is the analogue of \cite[Conjecture 1.7]{BLS99} stating that in the same setup of \cref{main_conjecture}, for any $p>p_c$ all infinite $p$-Bernoulli percolation clusters are transient almost surely.

We conclude this section with two open questions related to \cref{thm: main result} that are again based on the intuition that such unions should be ``supercritical''. In the case of supercritical Bernoulli percolation on $\ZZ^d$ it is shown in \cite{Barlow04} that the spectral dimension of the infinite cluster equals $d$ almost surely, and in \cite{Inv1,Inv2,Inv3} it is shown that almost surely the cluster is such that the simple random walk on it diffuses to Brownian motion. It is thus natural to ask the following.

\begin{question}
Consider the union of independent $\WUSF$ and $\eps$-$\WUSF$ on $\ZZ^d$. \begin{enumerate} \item Is the spectral dimension of the union equal to $d$ almost surely? \item Does the simple random walk diffuse to $d$-dimensional Brownian motion almost surely?  
\end{enumerate}
\end{question}

\subsection{Box percolation} In the proof of \cref{thm: main result} we will consider a slightly more general structure, for which we need the notion of\textit{ box percolation}.

\begin{definition}[$(k, \epsilon)$-box percolation]\label{def: keps percolation}
Let $d\geq 1$. Given $k \geq 1$ and $\epsilon \in (0,1)$ we sample a percolation configuration $\percok(\epsilon)$ as follows. Firstly, for each $k \geq 1$ we write
\[
B_k = [-k,k]^d
\]
and for each $z = (z_1, \ldots, z_d) \in (2k\ZZ)^d$ write
\[
B_k^z = [-k,k]^d + z,
\]
and let $E(B_k^z)$ denote its induced edge set. We also let $Q_k^z$ denote the subgraph of $B_k^z$ with the edgeset
\[
\{(x,y) \subset E(B_k^z): \not\exists \ \ell\in \{1,\ldots,d\} \text{ such that }x_{\ell} = y_{\ell}=z_\ell-k\}.
\]
Then, independently for every $z\in (2k\ZZ)^d$, choose a uniform edge from $Q_k^z$ and declare it open with probability $\eps$. Let $\percok(\epsilon)$ be the set of open edges.
\end{definition}

\begin{figure}[ht]
\begin{minipage}[b]{0.5\textwidth}
\centering
\begin{tikzpicture}[scale=0.8]
  \foreach \x in {0,1,...,4}
    \foreach \y in {0,1,...,4}
      \fill (\x,\y) circle (2pt);
  
  \foreach \x in {0,1,...,3}
    \foreach \y in {1,...,4}
      \draw (\x,\y) -- (\x+1,\y);
  
  \foreach \x in {0,1,...,3}
  	\draw[dotted] (\x,0) -- (\x+1,0);
  
  \foreach \x in {1,...,4}
    \foreach \y in {0,1,...,3}
      \draw (\x,\y) -- (\x,\y+1);
      
   \foreach \y in {0,1,...,3}
      \draw[dotted] (0,\y) -- (0,\y+1);

    \node at (2,2) [xshift=5pt, yshift=5pt] {$z$};
   
\end{tikzpicture}
\caption{$Q_2^z$ in $\ZZ^2$, dotted edges are not in $Q_2^z$.} 	
\end{minipage}
\begin{minipage}[b]{0.5\textwidth}	
\centering
	\begin{tikzpicture}[x={(1cm,0)},y={(0,1cm)},z={(0.6cm,0.2cm)}, scale = 2]

  \foreach \x in {0,1}
    \foreach \y in {1,2}
      \foreach \z in {1,2}
        \draw (\x,\y,\z) -- (\x+1,\y,\z);

  \foreach \x in {1,2}
    \foreach \y in {0,1}
      \foreach \z in {1,2}
        \draw (\x,\y,\z) -- (\x,\y+1,\z);

 \foreach \x in {1,2}
    \foreach \y in {1,2}
      \foreach \z in {0,1}
        \draw (\x,\y,\z) -- (\x,\y,\z+1);

\foreach \y in {0,1}
	\foreach \z in {0,1,2}
		\draw[dotted] (0,\y,\z) -- (0,\y+1,\z);

\foreach \y in {0,1,2}
	\foreach \z in {0,1}
		\draw[dotted] (0,\y,\z) -- (0,\y,\z+1);

\foreach \x in {0,1}
	\foreach \y in {0,1,2}
		\draw[dotted] (\x,\y,0) -- (\x+1,\y,0);

\foreach \x in {0,1,2}
	\foreach \y in {0,1}
		\draw[dotted] (\x,\y,0) -- (\x,\y+1,0);
	
\foreach \x in {0,1,2}
	\foreach \z in {0,1}
		\draw[dotted] (\x,0,\z) -- (\x,0,\z+1);

\foreach \x in {0,1}
	\foreach \z in {0,1,2}
		\draw[dotted] (\x,0,\z) -- (\x+1,0,\z);

  \foreach \x in {0,1,...,2}
    \foreach \y in {0,1,...,2}
      \foreach \z in {0,1,...,2}
        \fill (\x,\y,\z) circle (0.5pt);

    \node at (1,1,1) [xshift=5pt, yshift=6pt] {$z$};

\end{tikzpicture}

\caption{$Q_1^z$ in $\ZZ^3$, dotted edges are not in $Q_1^z$.}
\end{minipage}	

\end{figure}

The rather odd choice of edgeset in \cref{def: keps percolation} is made to guarantee two properties: (i) the sets $\{ E(Q_k^z) \}_{z \in (2k\ZZ)^d}$ form a partition of the edges of $\ZZ^d$ and (ii) for any cycle in $\ZZ^d$ there exists $z\in (2k\ZZ)^d$ such that at least two edges of the cycle belong to $E(Q_k^z)$. This is proved in \cref{cl: two edges cycle box} and is used in \cref{lem: domination} to show that for all $\epsilon>0$ and all $k \geq 1$ the $\eps$-$\WUSF$ stochastically dominates $\percok(\epsilon/2d)$. As a result, to prove \cref{thm: main result} it will in fact be sufficient to prove the following theorem.

\begin{theorem}\label{thm: general perco}
Let $d\geq 3$. Take any $k \geq 1$ and any $\epsilon \in (0,1]$. Let $\Lambda$ be an everywhere percolating subgraph of $\ZZ^d$. Then, almost surely, $\Lambda \cup \percok(\eps)$ is connected and transient.
\end{theorem}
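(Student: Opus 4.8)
I would prove the two assertions---connectivity and transience---separately, the transience statement being the substantial one and handled by a single renormalisation that leans on the fact that every component of $\Lambda$ is infinite.

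\emph{Connectivity.} The plan is to note first that every cluster $C$ of $\Lambda\cup\percok(\eps)$ is a union of $\Lambda$-components: if $v\in C$ then the whole $\Lambda$-component of $v$ lies in $C$, since $\Lambda\subseteq\Lambda\cup\percok(\eps)$. Hence every cluster is infinite, and if $C\neq\ZZ^d$ then $\ZZ^d\setminus C$ is also a non-empty union of $\Lambda$-components and therefore infinite; since $\ZZ^d$ has one end, the edge-boundary $\partial_E C$ in $\ZZ^d$ is infinite. Now $\partial_E C$ meets infinitely many of the pairwise disjoint sets $E(Q_k^z)$, and for each such $z$ the box percolation independently declares a prescribed boundary edge of $Q_k^z$ open with probability at least $\eps/|E(B_k)|>0$; running a cluster exploration from a fixed vertex $x$ (revealing boxes only as the cluster grows) and applying Borel--Cantelli, I would conclude $\pr{x\not\leftrightarrow y}=0$ for every pair $x,y\in\ZZ^d$, and then take a union bound over the countably many pairs.

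\emph{Transience: set-up and the main estimate.} Fix a large multiple $L$ of $2k$, tile $\ZZ^d$ by the blocks $R_w=Lw+\{0,\dots,L-1\}^d$, $w\in\ZZ^d$, and let $R_w^{+}$ be the concentric box of side $3L$. Call a $\Lambda$-component \emph{relevant} for $w$ if it meets $R_w$, and let $H_w$ be the subgraph of $\Lambda\cup\percok(\eps)$ formed by all relevant components together with all open box-percolation edges lying inside $R_w^{+}$. Two deterministic facts drive the argument: (i) every component of $\Lambda$ restricted to $R_w^{+}$ must touch $\partial R_w^{+}$, since otherwise it would be a \emph{finite} $\Lambda$-component, so there are at most $O(L^{d-1})$ relevant components; and (ii) a relevant component, meeting both $R_w$ and $\partial R_w^{+}$, contains a path of length at least $L$, hence visits at least $L/(2k+1)^d$ distinct tiling boxes inside $R_w^{+}$. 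Since there are $\gg L^{d-1}$ open box-percolation edges inside $R_w^{+}$, I would use a first-moment estimate based on (i)--(ii) to show that, with probability tending to $1$ as $L\to\infty$, the largest component of $H_w$---call it the \emph{gadget} $\Gamma_w$---contains all but an $o(1/L)$-fraction of the vertices of $R_w^{+}$; declare $w$ \emph{good} on this event $\mathcal G_w$. This probability estimate is the step I expect to be hardest: it is precisely the infinitude of the $\Lambda$-components that simultaneously caps their number at $O(L^{d-1})$ and forces each relevant one to be large inside the window, which is what makes the $\gg L^{d-1}$ available box-percolation edges enough to fuse the relevant components into one giant piece, and quantifying the failure probability of this fusion (and its decay in $L$) is the technical heart of the argument.

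\emph{Transience: conclusion.} Since $\mathcal G_w$ depends only on the box percolation inside $R_w^{+}$, the indicator field $(\mathbf 1_{\mathcal G_w})_w$ is finite-range dependent with marginals arbitrarily close to $1$ for $L$ large, so by the standard domination result for finitely dependent fields with high marginals \cite{LSS1997} it stochastically dominates a supercritical Bernoulli site percolation on the block lattice $\ZZ^d$; fixing such a coupling, the infinite Bernoulli cluster $\mathcal I$ lies inside an infinite good cluster $\mathfrak C$, and by Rayleigh monotonicity together with the classical transience of $\mathcal I$ for $d\ge 3$, $\mathfrak C$ is transient. Next, for block-adjacent good $w\sim w'$ the gadgets $\Gamma_w$ and $\Gamma_{w'}$ each contain all but an $o(1/L)$-fraction of the $\sim L^{d-1}$ vertices of the common face $R_w\cap R_{w'}$, hence share a vertex; and each gadget, lying in $R_w^{+}$, has $O(L^d)$ edges and thus effective resistance $O(L^d)$ between any two of its vertices. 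Therefore $G':=\bigcup_{w\in\mathfrak C}\Gamma_w$ is a connected subgraph of $\Lambda\cup\percok(\eps)$ that is roughly isometric to $\mathfrak C$: mapping each $w$ to a vertex of $R_w$ and each edge of $\mathfrak C$ to a path of length $O(L^d)$ through the relevant shared face, with bounded edge-multiplicity, gives a rough embedding of the transient graph $\mathfrak C$ into $G'$, so $G'$ is transient by the transfer of transience under rough embeddings \cite[Ch.~2]{Lyons_Peres2016book}. Finally, using connectivity of $\Lambda\cup\percok(\eps)$, pick any $v_0\in G'$: then $\Reff(o\leftrightarrow v_0)<\infty$, while $\Reff(v_0\leftrightarrow\infty)\le\Reff^{G'}(v_0\leftrightarrow\infty)<\infty$ by Rayleigh monotonicity, and the triangle inequality for effective resistance yields $\Reff(o\leftrightarrow\infty)<\infty$, i.e.\ transience.
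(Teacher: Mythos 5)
Your renormalisation skeleton (block indicator field, finite-range dependence, domination by supercritical Bernoulli site percolation via \cite{LSS1997}, transience of its infinite cluster, rough embedding, Rayleigh) is essentially the paper's own reduction in Section \ref{sec:main_proof}. The genuine gap is that the entire content of the theorem sits in the block estimate you defer, and the route you propose for it --- ``a first-moment estimate based on (i)--(ii)'' --- would not work. Facts (i) and (ii) give each relevant component order $L$ distinct boxes $Q_k^z$ on its boundary, so a union bound shows that with high probability \emph{every} relevant component merges with \emph{some} other component; but this is far from producing one dominant cluster: the $O(L^{d-1})$ components may merge in pairs or small groups, and the randomness you have already used cannot be re-used to fuse the resulting clusters further. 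This is precisely why the paper, following Benjamini--Tassion, iterates: \cref{cl: sprinkling} shows the number of components drops by a factor $n^{1/4}$ per round, and since $\percok(\eps)$ cannot be decomposed into independent thinned copies, the fresh randomness for each round is obtained \emph{spatially}, from $4d$ disjoint annuli separated by $2k$-buffers, with the large annular edge boundary of every crossing component supplying the many independent boxes. Without such an iteration (or a genuinely new idea) your event $\mathcal G_w$ has no proof. Moreover, as stated your density claim is not even achievable: $H_w$ excludes all $\Lambda$-components meeting only $R_w^{+}\setminus R_w$, and these can occupy a positive fraction of $R_w^{+}$; what you actually need (and use at the shared faces) is that the gadget contains all but $o(L^{d-1})$ vertices of $R_w$, which again is a statement of the same strength as \eqref{eqn:connection very likely}.

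Your qualitative connectivity argument also has a gap. The exploration shows (modulo the adaptivity of revealing boxes along the growing cluster) that the cluster of $x$ a.s.\ merges with \emph{some} neighbouring cluster infinitely often, because its edge boundary meets infinitely many distinct $Q_k^z$; it does not show that it ever merges with the cluster of $y$. The two clusters need not share a single boundary edge --- other infinite clusters, each a union of infinite $\Lambda$-components, can separate them --- so there is no box whose opening has conditional probability bounded below of joining $x$ to $y$, and Borel--Cantelli gives nothing. In the paper connectivity is not obtained by a soft argument at all: it follows from the quantitative estimate \eqref{eqn:connection very likely} (all of $B_n$ connected inside $B_{2n}$ up to probability $C\exp(-c\sqrt n)$) together with Borel--Cantelli over $n$, i.e.\ from exactly the sprinkling machinery that your proposal leaves unproven.
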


\subsection{Organization} 
We first show in \cref{sctn:rule for domination} how \cref{thm: main result} can be formally deduced from \cref{thm: general perco}.  In \cref{sec:main_proof} we prove \cref{thm: general perco}.

\subsection*{Acknowledgements} We would like to thank Peleg Michaeli for posing the question of \cref{cor:main}. We also thank him, Matan Harel and Ofir Karin for useful discussions. This research is supported by the ERC consolidator grant 101001124 (UniversalMap) as well as ISF grants 1294/19 and 898/23.

\section{Proof of Theorem \ref{thm: main result} given Theorem \ref{thm: general perco}}\label{sctn:rule for domination}

For the rest of this section we denote by $\cF$ a sample of the $\WUSF$ and by $\cF_\eps$ a sample of the $\eps$-$\WUSF$. Also recall the definition of $\percok(\eps)$ in \cref{def: keps percolation}. To show that \cref{thm: main result} follows from \cref{thm: general perco}, we will show that $\cF$ stochastically dominates $\percok(1/2d)$. This will follow by the two claims below.

\begin{claim}\label{cl: n edges ust}
Let $H$ be a (possibly infinite) set of edges of $\ZZ^d$ that does not contain a cycle. Then, there is an ordering $\{e_n\}_{n\geq 1}$ of $H$ such that for all $n\geq 1$ and any partition of $\{e_1,\ldots, e_{n-1}\}$ into two sets $A$ and $B$ we have that
\begin{equation*}
	\prcond{e_n \in \cF}{A \subseteq \cF, B\cap \cF = \emptyset}{} \geq \frac{1}{2d}.
\end{equation*}
\end{claim}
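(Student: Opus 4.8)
The plan is to combine the spatial Markov property of the $\WUSF$ with an effective-resistance bound, after choosing the ordering so that the conditioning can never ``crowd'' the endpoint we care about. Since $H$ contains no cycle it is a forest; enumerate its connected components, root each one, orient every edge away from the root of its component, and for an edge $e$ let $c(e)$ be the endpoint of $e$ farther from the root of its component. Order $H$ so that within each component the edges appear in non-decreasing order of distance from the root, interleaving the components arbitrarily; this is possible since $H$ is countable and each component is locally finite. The point of this ordering is that for every $n$ the vertex $y_n := c(e_n)$ is incident to none of $e_1,\dots,e_{n-1}$: within the component of $e_n$ the only edge at $y_n$ that is not strictly deeper than $e_n$ is $e_n$ itself, and edges of other components are vertex-disjoint from $y_n$. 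Write $x_n$ for the other endpoint of $e_n$.

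Now fix $n$ and a partition $\{e_1,\dots,e_{n-1}\}=A\sqcup B$, assuming the conditioning event has positive probability (there is nothing to prove otherwise). Since $A\subseteq H$ is acyclic, the spatial Markov property of the $\WUSF$ gives that, conditionally on $\{A\subseteq\cF\}\cap\{B\cap\cF=\emptyset\}$, the configuration $\cF$ on the remaining edges is a $\WUSF$ on the network $G':=(\ZZ^d/A)\setminus B$ obtained by contracting $A$ and deleting $B$. One checks that $G'$ is connected whenever this conditioning event is non-degenerate: $\ZZ^d/A$ has a single end because $A$ is finite, so deleting the finite set $B$ could only separate off a finite piece, and then $B$ would contain the edge-boundary in $\ZZ^d$ of a finite vertex set, which the $\WUSF$ a.s.\ never avoids (every $\WUSF$ component on $\ZZ^d$ is infinite). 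Hence, by Kirchhoff's formula for the $\WUSF$ (obtained by taking limits over exhaustions in the identity for finite spanning trees),
\[
\prcond{e_n\in\cF}{A\subseteq\cF,\ B\cap\cF=\emptyset}{}=\pr{e_n\in\WUSF(G')}=\Reff^{G'}(x_n,y_n),
\]
the effective resistance in $G'$ between the images of $x_n$ and $y_n$; here we used that $e_n$ has unit conductance.

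It remains to show $\Reff^{G'}(x_n,y_n)\ge\tfrac1{2d}$. By Rayleigh monotonicity, deleting the edges of $B$ only increases effective resistances, so $\Reff^{G'}(x_n,y_n)\ge\Reff^{\ZZ^d/A}(x_n,y_n)$, and here the ordering pays off: because no edge of $A$ is incident to $y_n$, this vertex is not identified with any other when $A$ is contracted, so in $\ZZ^d/A$ it still has exactly $2d$ incident edges, of total conductance $2d$, and they form an edge-cutset separating $y_n$ from $x_n$ (note $x_n\neq y_n$ and $x_n$ is not merged into $y_n$). The Nash--Williams inequality then yields $\Reff^{\ZZ^d/A}(x_n,y_n)\ge\tfrac1{2d}$, as required. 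The one place this argument genuinely uses that $H$ is cycle-free — and the crux of the proof — is the construction of the ordering guaranteeing that $y_n$ is untouched by $e_1,\dots,e_{n-1}$; without it, contracting $A$ could fuse arbitrarily many vertices into $y_n$, inflate its degree, and destroy the uniform bound. The remaining ingredients (spatial Markov property, Kirchhoff's formula, Rayleigh monotonicity, Nash--Williams) are standard.
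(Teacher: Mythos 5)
Your proposal is correct and follows essentially the same route as the paper: order the forest so that each edge $e_n$ has an endpoint untouched by $e_1,\dots,e_{n-1}$, use the spatial Markov property and Kirchhoff's formula to rewrite the conditional probability as a wired effective resistance, and bound it below by $\tfrac{1}{2d}$ via Nash--Williams at that degree-$2d$ endpoint. The only (harmless) deviations are your extra Rayleigh monotonicity step passing to $\ZZ^d/A$ --- unnecessary since $y_n$ is incident to no edge of $B$ either, so Nash--Williams applies directly in $(\ZZ^d/A)\setminus B$ as in the paper --- and the explicit connectivity check for the conditioned network.
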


\begin{proof}

Note that since the edges $H$ do not contain a cycle, the edge-set $H$ forms a forest. Hence we can order $H$ by first ordering the edges in every tree $T$ of $H$ in such a way that at each step we discover a new vertex, and then concatenate the orderings for all the trees arbitrarily. Write $\{e_n\}_{n\geq 1}$ for this ordering of $H$.
Then for every $n\geq 1$ and $A$ and $B$ partitioning $\{e_i\}_{i=1}^{n-1}$, the edge $e_n$ has at least one endpoint which is not an endpoint of any of the edges in $A\cup B$. Denote this endpoint by $v_n$ and the other one by $u_n$.

Then, by Kirchhoff's formula \cite[Section 4.2 and Section 10.2]{Lyons_Peres2016book} and the spatial Markov property \cite[Proposition 4.2]{BLPS2001}, we have for all $1 \leq k \leq n$ that
\[
\prcond{ e_{n} \in \cF}{A \subset \cF, B\cap \cF = \emptyset}{} = \Reff^{(\ZZ^d / A) \setminus B} (e_{n}),
\]
where the right hand side denotes the wired effective resistance between the endpoints of $e_n$ in the network $(\ZZ^d / A) \setminus B$ which is obtained from a copy of $\ZZ^d$ in which each of the edges in $A$ have been contracted and each of the edges in $B$ have been removed. Note that $\deg (v_n)=2d$ in this graph so by the Nash-Williams inequality \cite[(2.13)]{Lyons_Peres2016book} this resistance is at least $\frac{1}{2d}$.
\end{proof}

\begin{claim}\label{cl: two edges cycle box} Fix $k \geq 1$. For any edge $e\in\ZZ^d$ there exists a unique $z\in(2k\ZZ)^d$, which we denote $g(e)$, such that $e$ is contained in $Q_k^z$. Furthermore, any cycle $(e_1, \ldots, e_n)$ in $\ZZ^d$ contains at least two edges 
$e_{i}, e_{j}$ with $g(e_i) = g(e_j)$. 
\end{claim}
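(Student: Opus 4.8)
The plan is to identify, for each edge $e$, the unique box $g(e)$ by an explicit formula that depends only on the ``upper'' endpoint of $e$, and then to exhibit two edges of a cycle with the same $g$-value at the cycle's lexicographically largest vertex. For the first assertion, write $e=\{x,\,x+\mathbf{e}_m\}$ with $x\in\ZZ^d$ and $m\in\{1,\dots,d\}$, and set $y:=x+\mathbf{e}_m$, the endpoint of $e$ with the larger $m$-th coordinate. For $z\in(2k\ZZ)^d$, unwinding the requirement $e\in E(B_k^z)$ together with the extra condition defining $Q_k^z$ gives a coordinatewise description of the admissible $z$. In a coordinate $\ell\neq m$ both endpoints of $e$ have $\ell$-th coordinate $x_\ell$, so $e\in E(B_k^z)$ demands $z_\ell-k\le x_\ell\le z_\ell+k$ while the ``lower face'' exclusion demands in addition $x_\ell\neq z_\ell-k$; altogether $z_\ell\in[\,x_\ell-k,\,x_\ell+k\,)=[\,y_\ell-k,\,y_\ell+k\,)$. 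In the coordinate $\ell=m$ there is no lower-face exclusion (the two endpoints differ there), and $e\in E(B_k^z)$ demands $z_m-k\le x_m$ and $x_m+1\le z_m+k$, i.e.\ $z_m\in[\,x_m+1-k,\,x_m+k\,]=[\,y_m-k,\,y_m+k\,)$. In every coordinate this is a half-open interval of length $2k$, hence contains exactly one multiple of $2k$, so the admissible $z$ exists and is unique. This proves the existence-and-uniqueness statement, and it shows moreover that $g(e)_\ell$ is the unique multiple of $2k$ in $[\,y_\ell-k,\,y_\ell+k\,)$ for every $\ell$; writing $\rho\colon\ZZ\to 2k\ZZ$ for the map sending $t$ to that multiple, we get $g(e)=\rho(y_e)$ (applying $\rho$ coordinatewise), so $g(e)$ depends on $e$ only through its upper endpoint $y_e$. (In particular $\{E(Q_k^z)\}_z$ partitions $E(\ZZ^d)$, which is property~(i).)

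For the second assertion, let $C=(e_1,\dots,e_n)$ be a cycle with vertex sequence $v_0,v_1,\dots,v_n=v_0$ and $e_i=\{v_{i-1},v_i\}$; it suffices to treat simple cycles, since any cycle contains one and an equal pair of $g$-values on a sub-cycle is an equal pair on $C$. Let $v=v_{i^*}$ be the lexicographically largest vertex among $v_0,\dots,v_{n-1}$. Its two incident edges $e_{i^*}$ and $e_{i^*+1}$ each join $v$ to a neighbour of the form $v\pm\mathbf{e}_p$; if such a neighbour were $v+\mathbf{e}_p$ it would be lexicographically larger than $v$ (it agrees with $v$ in coordinates $1,\dots,p-1$ and is larger in coordinate $p$), contradicting maximality. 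Hence both neighbours have the form $v-\mathbf{e}_p$, so $v$ is the upper endpoint of both $e_{i^*}$ and $e_{i^*+1}$, and by the formula above $g(e_{i^*})=\rho(v)=g(e_{i^*+1})$. Since $e_{i^*}\neq e_{i^*+1}$, the claim holds with $z=\rho(v)$.

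The one content-bearing step is the first one: recognising that the deliberately asymmetric lower-face exclusion makes the box containing $e$ a function of the upper endpoint of $e$ alone, through an ordinary rounding map. The interval bookkeeping there is routine but must be carried out carefully precisely because the $\ell=m$ and $\ell\neq m$ coordinates are treated differently, and this asymmetry is exactly what turns $\{E(Q_k^z)\}_z$ into a genuine partition. Granted that formula, the cycle statement is the one-line lexicographic-maximum argument above, so I expect no further obstacle.
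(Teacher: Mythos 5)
Your proof is correct, and it takes a genuinely (if mildly) different route from the paper's. Both arguments rest on the same explicit characterisation of $g(e)$ as a rounding of one endpoint: the unique $z\in(2k\ZZ)^d$ with the chosen endpoint's $\ell$-th coordinate in $(z_\ell-k,z_\ell+k]$ for every $\ell$. The paper chooses the endpoint $u$ with the smaller value of $r_u$, the number of coordinates congruent to $k$ modulo $2k$, and then produces the required pair of edges at a weak minimum of $r$ along the cycle, using $|r(j+1)-r(j)|\le 1$ and $r(0)=r(n)$; you instead prove the cleaner statement that $g(e)$ is a function of the \emph{upper} endpoint alone (the endpoint with the larger coordinate in the direction of the edge), via a direct coordinatewise interval computation, and then take the pair of edges at the lexicographically largest vertex $v$ of the cycle, both of whose cycle-neighbours must be of the form $v-\mathbf{e}_p$. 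Your interval bookkeeping also delivers uniqueness of $z$ directly (each half-open window of length $2k$ contains exactly one multiple of $2k$), whereas the paper dispatches uniqueness by appealing to disjointness of the sets $Q_k^z$; on the other hand the paper's $r$-minimum argument applies verbatim to arbitrary closed walks. That is the only small caveat in your write-up: your reduction ``any cycle contains a simple cycle'' fails for a closed walk that traverses an edge back and forth, but in that degenerate case two of the listed edges coincide and the claim is trivial, and in the paper's application (edge sets containing no cycle as a subgraph) only simple cycles are relevant, so nothing is lost.
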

\begin{proof}
For any vertex $u \in \ZZ^d$ write $r_u$ for the number of coordinate of $u$ that are of the form $k+(2k)\ZZ$. Let $e=(u,v)$ be an edge and assume without loss of generality that $r_u \leq r_v$. For any coordinate $\ell \in \{1,\ldots, d\}$ there exists a unique $n_\ell \in \ZZ$ such that 
$u_\ell - 2k n_\ell \in (-k,k]$, we claim that $z=(2kn_1, \ldots, 2kn_\ell)$ is the unique desired $z$. Indeed, for any coordinate $\ell$, if $|z_\ell - u_\ell|<k$, then $|z_\ell-v_\ell|\leq k$ since $u$ and $v$ are neighbors. If $|z_\ell-u_\ell|=k$ (i.e., if $u_\ell = z_\ell + k$) we must have that $u_\ell=v_\ell$ since $r_u \leq r_v$. Hence $(u,v) \subseteq B_k^z$. By our choice of $z$ we have that $u_\ell > z_\ell - k$ for all $\ell$ and therefore $(u,v) \in Q_k^z$. Uniqueness of $z$ follows immediately since the sets $\{Q_k^z\}_{z\in (2k\ZZ)^d}$ are disjoint. 

For the second assertion write $v_0, \ldots, v_{n-1}, v_n = v_0$ for the vertices of the cycle, so that $e_i = (v_{i-1}, v_i)$ (we consider the coordinates mod $n$). For every $j\in \{0,\ldots, n\}$, write $r(j)$ for $r_{v_j}$. %
Note that $|r(j) - r(j+1)| \leq 1$ for all $j$ and that $r(0) = r(n)$. Let $i$ be a global weak minimum of $r:\{0,\ldots,n\}\to \{0,1,\ldots,\}$, so that $r(i) \leq r(i+1) \wedge r(i-1)$. From our description of $z$ in the previous paragraph we obtain that $g(e_{i}) = g(e_{i+1})$. 
\end{proof}

We now combine the previous two claims to obtain the desired stochastic domination.

\begin{lemma}\label{lem: domination}
Let $d\geq 1$ and $k\geq 1$. Then for any $\eps>0$ the forest $\cF_\eps$ stochastically dominates $\percok(\frac{\eps}{2d})$. %
\end{lemma}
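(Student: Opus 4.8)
The plan is to construct an explicit coupling between $\cF_\eps$ and $\percok(\eps/2d)$ on the same probability space, built up box-by-box over the cells $\{Q_k^z\}_{z \in (2k\ZZ)^d}$. The key structural input is \cref{cl: two edges cycle box}: the edge sets $E(Q_k^z)$ partition $E(\ZZ^d)$, so it suffices to show that for each $z$, conditionally on the status of all edges outside $Q_k^z$ in the relevant samples, the $\eps$-$\WUSF$ retains a uniformly random edge of $Q_k^z$ with probability at least $\eps/2d$ — independently across the boxes. To set this up I would first sample $\cF$, then for each box $z$ independently pick a uniformly random target edge $u_z \in E(Q_k^z)$; this is exactly the target edge chosen in \cref{def: keps percolation}. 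The claim to prove is then that, after $\eps$-retention, $\pr{u_z \in \cF_\eps}\ge \eps/2d$ and that the events $\{u_z\in\cF_\eps\}_{z}$ can be coupled to dominate independent $\mathrm{Bernoulli}(\eps/2d)$ variables.

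The heart of the argument is the following: because $E(Q_k^z)$ contains no cycle-closing pair from a single box in the sense needed — more precisely, because (as shown in \cref{cl: two edges cycle box}) any cycle of $\ZZ^d$ meets some $E(Q_k^{z'})$ in at least two edges — the edges $\{u_z\}_{z}$, one chosen from each box, form an acyclic set. Indeed if $\{u_z\}_z$ contained a cycle, that cycle would have to use two edges from a common box $E(Q_k^{z'})$, contradicting that only one edge $u_{z'}$ was picked from each box. So $H := \{u_z : z \in (2k\ZZ)^d\}$ is a forest, and now \cref{cl: n edges ust} applies: there is an ordering $e_1, e_2, \ldots$ of $H$ such that for every $n$ and every partition of $\{e_1,\dots,e_{n-1}\}$ into $A$ (present in $\cF$) and $B$ (absent from $\cF$), $\prcond{e_n\in\cF}{A\subseteq\cF,\, B\cap\cF=\emptyset}{}\ge 1/2d$.

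With this in hand I would reveal the edges $e_1, e_2, \ldots$ of $H$ one at a time in this order, querying whether each lies in $\cF$, and then perform the $\eps$-Bernoulli thinning on each revealed edge of $\cF$ independently. By \cref{cl: n edges ust}, at each step the conditional probability that $e_n \in \cF_\eps$ — given the entire history of which earlier $e_i$ are in $\cF$ (and hence in $\cF_\eps$, after thinning, since thinning is independent) — is at least $\eps \cdot \tfrac{1}{2d}$. A standard sequential-coupling argument (e.g.\ the one behind the fact that a process whose conditional success probabilities are uniformly $\ge p$ dominates i.i.d.\ $\mathrm{Bernoulli}(p)$) then lets me couple $(\mathbbm 1[e_n\in\cF_\eps])_{n\ge1}$ so that it dominates an i.i.d.\ $\mathrm{Bernoulli}(\eps/2d)$ sequence $(\xi_n)_{n\ge1}$. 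Relabelling $\xi_n$ back to the box $z$ with $u_z = e_n$, and declaring in $\percok(\eps/2d)$ that the chosen edge $u_z$ is open iff $\xi_{n(z)} = 1$, gives a valid sample of $\percok(\eps/2d)$ (the target edges are uniform and independent across boxes by construction, and the openness indicators are i.i.d.\ $\mathrm{Bernoulli}(\eps/2d)$) which is contained in $\cF_\eps$. This is precisely the asserted stochastic domination.

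The main obstacle is making the sequential coupling rigorous in the possibly-infinite setting: $H$ is a countably infinite forest, so I must check that \cref{cl: n edges ust} genuinely supplies a single ordering valid for all $n$ simultaneously (it does — the ordering is fixed before any conditioning), and that the step-by-step coupling extends to infinitely many steps, which is routine since at each finite stage the bound is uniform and one can pass to the limit using that stochastic domination of finite-dimensional marginals implies stochastic domination of the whole sequence. One should also double-check that the independence of the thinning across edges is not disturbed by the conditioning in \cref{cl: n edges ust}: since the thinning coins are drawn independently of $\cF$, conditioning on membership of earlier edges in $\cF$ leaves the thinning coin of $e_n$ (and its law given $e_n\in\cF$) untouched, so the factor of $\eps$ comes through cleanly. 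Everything else is bookkeeping.
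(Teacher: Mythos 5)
Your proposal is correct and follows essentially the same route as the paper: sample one uniform target edge per box, observe via \cref{cl: two edges cycle box} that this collection $H$ is acyclic, and reveal $\cF\cap H$ edge by edge in the ordering from \cref{cl: n edges ust} to dominate i.i.d.\ Bernoulli$(\nicefrac{1}{2d})$ on $H$. The only cosmetic difference is that the paper first reduces to $\eps=1$ and applies the $\eps$-thinning afterwards, whereas you fold the thinning coins into the sequential coupling, which is equally valid.
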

\begin{proof}
It suffices to prove for $\eps=1$; the general case follows by applying the same $\eps$-Bernoulli percolation to $\cF$ and $\percok(\frac{1}{2d})$. To sample $\percok(\frac{1}{2d})$, for each $z\in(2k\ZZ)^d$ we sample an independent uniform edge from $Q_k^z$ and denote this collection of edges by $H$. Note that $H$ does not contain any cycles by \cref{cl: two edges cycle box}. Therefore, by revealing the edges of $\cF \cap H$ edge by edge according to the ordering obtained from \cref{cl: n edges ust} we deduce that $\cF \cap H$ dominates $\frac{1}{2d}$-Bernoulli percolation on $H$, and the latter is precisely $\percok(\frac{1}{2d})$. 
\end{proof}%

\section{Proof of Theorem \ref{thm: general perco}} \label{sec:main_proof}
We start with some notation. As in \cref{def: keps percolation}, for $n \geq 1$ and $z \in \ZZ^d$ we let $B_n$ denote the box $[-n,n]^d$ and let $B_n^z = B_n + z$. In addition, when $m \leq n$ we let $A_{m,n}$ denote the annulus $B_n \setminus B_m$. For a subset of vertices $K\subset \ZZ^d$, the \textit{edge boundary} or simply the \textit{boundary} $\edgebdy K$ of $K$ is defined to be the set of edges that connect $K$ to its complement $\ZZ^d \backslash K$. Finally, for $z \in \ZZ^d$ and $k \geq 1$ we take $Q_k^z$ as in \cref{def: keps percolation}.

As explained in the introduction, the proof of \cref{thm: general perco} follows an existing proof of Benjamini-Tassion \cite{BenTass} for the case when $\percok(\eps)$ is replaced by an $\epsilon$-Bernoulli percolation. By standard renormalization techniques, it is in fact sufficient to prove that there exist constants $c>0$ and $C<\infty$ such that for all $\Lambda$ and all sufficiently large $n$ (cf \cite[Lemma 1.1]{BenTass})
\begin{equation}\label{eqn:connection very likely}
\pr{\forall x,y \in B_n, x \text{ is connected to } y \text{ in } \left(\Lambda \cup \percok(\eps)\right) \cap B_{2n}} \geq 1-C\exp(-c\sqrt{n}).
\end{equation}

To establish \eqref{eqn:connection very likely} in the case of Bernoulli percolation, Benjamini and Tassion apply a technique known as \textit{sprinkling} to the component graph of the connected clusters of $\Lambda$ inside a large box; more specifically, they consider the effect of adding an $\eps$-percolation to this component graph by instead adding $4d$ independent $\frac{
\eps}{4d}$-percolations. At each step, they shown that there is a high probability that the number of connected components decreases by a factor of $n^{\nicefrac{1}{4}}$. We cannot directly apply the same strategy since we cannot decompose $\percok(\eps)$ into independent copies of $\percok(\frac{\eps}{4d})$; however we can instead use a form of ``spatial sprinkling" by considering $\percok(\eps)$ on a sequence of $4d$ disjoint annuli.

Moreover, their proof strategy relies on the fact that this component graph is already highly connected at the first step, meaning that there are many edges which could possibly merge components when one adds an $\frac{\eps}{4d}$-percolation. Once again, to obtain the probabilistic bounds they use the fact that the percolation on each of these edges are independent. This is no longer the case for box percolation, but this can also be easily overcome using the geometry of $\ZZ^d$: since the components must reach infinity this means that their boundary is large, and hence must contain many edges in distinct $Q_k^z$. %

\subsection{Proof of Theorem \ref{thm: general perco} assuming \eqref{eqn:connection very likely}}

Write $\vec{e_i}$ for the $i$-th unit vector in $\ZZ^d$ and consider the following random field $(X^{\Lambda}_s)_{s\in \ZZ^d}$. For each $s \in \ZZ^d$ let $X^{\Lambda}_{s}=1$ if and only if
for each $y\in\{ns\pm n\vec{e_i},i=1,\ldots,d\}$ there is a finite path in $\Lambda \cup \perco_k(\epsilon)$ connecting $ns$ and $y$ that also lies entirely in the box $B^{ns}_{2n}$. Let $p=p^{\Lambda}(n)=\inf_{s \in \ZZ^d} \pr{X^{\Lambda}_s=1}$. By \eqref{eqn:connection very likely}, we have that $p^{\Lambda}(n) \to 1$ as $n \to \infty$ (in fact uniformly over choices of $\Lambda$) and moreover, provided that $n > 2k$, we have that $X^{\Lambda}_s$ and $X^{\Lambda}_t$ are independent whenever $||s-t||_{\infty}>4$.

Hence by \cite[Corollary 1.4]{LSS1997} we deduce that $(X^{\Lambda}_s)_{s\in\ZZ^d}$ dominates a supercritical Bernoulli site percolation for all sufficiently large $n$. Since the infinite cluster of a supercritical Bernoulli site percolation on $\ZZ^d$ is transient for $d\geq 3$ \cite{GKZ1993}, it follows that there is a transient connected subgraph formed by the open sites (open means that $X^{\Lambda}_s=1$) of the random field $(X^{\Lambda}_s)_{s\in\ZZ^d}$. By the definition of the random field, this subgraph is roughly equivalent to a subgraph of $\Lambda \cup \perco_k(\epsilon)$ (see definition above \cite[Theorem 2.17]{Lyons_Peres2016book}. Since transience is preserved under rough equivalences \cite[Theorem 2.17]{Lyons_Peres2016book}, we deduce that this subgraph of $\Lambda \cup \perco_k(\epsilon)$ is transient and hence by Rayleigh's monotonicity principle the connected graph $\Lambda \cup \perco_k(\epsilon)$ is also transient.%
\qed

\subsection{Proof of \eqref{eqn:connection very likely}}\label{sctn:BT proof}

\begin{lemma}\label{lem: first bt lemma}
	Let $k>1$. There exists $n_0 = n_0(k,d)<\infty$ such that the following holds. Let $n_0 \leq n\leq m \leq m+\sqrt{n} \leq 8dn$ and let $H$ be a graph on the vertex set $B_{m+\sqrt{n}}$ such that every connected component of $H$ intersects both $B_m$ and $\partial B_{m+\sqrt{n}}$ (in particular $H$ has no isolated vertices). Let $\percok(\eps)$ be as described in \cref{def: keps percolation}. Then there exist $C< \infty$ and $c>0$, not depending on $H$, such that with probability at least $1-C\exp(-c\sqrt{n})$ we have that $H\cup(\percok(\eps) \cap A_{m+2k,m+\sqrt{n}})$ is connected. 
\end{lemma}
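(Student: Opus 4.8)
The plan is to use the component structure of $H$ together with the partition of $\ZZ^d$ into the boxes $\{Q_k^z\}$ and a ``spatial sprinkling'' over disjoint sub-annuli of $A_{m+2k,m+\sqrt n}$. Since $H$ has the property that every component meets both $B_m$ and $\partial B_{m+\sqrt n}$, each component must cross a linear number of ``scales'' inside the annulus; in particular, for each component $K$ and each of a collection of $\asymp \sqrt n$ disjoint concentric sub-annuli of thickness $\asymp 2k$ between $B_{m+2k}$ and $B_{m+\sqrt n}$, the component $K$ has a nonempty edge boundary inside that sub-annulus. By isoperimetry in $\ZZ^d$ (or just the crossing property), this edge boundary is large — it contains edges lying in many distinct $Q_k^z$ — so conditionally on $\percok(\eps)$ outside a given sub-annulus, the probability that the open edge chosen in one of these $Q_k^z$ actually joins $K$ to a different component is bounded below by some $\delta=\delta(k,d,\eps)>0$, independently across the $\asymp \sqrt n$ sub-annuli.

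First I would set up the component graph: let $\mathcal K$ be the set of connected components of $H$, and work with the multigraph whose vertices are these components and whose edges (in a given sub-annulus) come from open edges of $\percok(\eps)$ with endpoints in two different components. I would then run the Benjamini–Tassion argument of \cite{BenTass}: show that at each of the $\asymp \sqrt n$ sprinkling steps, if there are currently $\geq 2$ components, then with probability bounded below by a constant the number of components strictly decreases — in fact decreases by a multiplicative factor. The key point replacing independence of Bernoulli edges is the observation highlighted in the text: a component crossing the annulus has edge boundary of size growing with the scale, so it meets $\Omega(1)$ (in fact a growing number of) distinct boxes $Q_k^z$ in each sub-annulus, and the choices of open edges in distinct $Q_k^z$'s, and across distinct sub-annuli, are independent. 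Iterating over all $\asymp \sqrt n$ sub-annuli, a union bound together with a large-deviation estimate for the number of ``successful'' steps (each succeeding independently with probability $\geq \delta$) shows that after all steps only one component remains, except with probability $C\exp(-c\sqrt n)$. This is exactly the form of the claimed bound, with $c,C$ depending only on $k,d,\eps$ and not on $H$.

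I would take care with two technical points. The number of sub-annuli one can fit is $\asymp \sqrt n / k$, and the hypothesis $m+\sqrt n\le 8dn$ (so that all the relevant boxes $B_{2n}^{ns}$ used later stay in a controlled region) and $n\ge n_0(k,d)$ (so that $\sqrt n \gg k$, guaranteeing enough sub-annuli and that each crossing component genuinely has a nontrivial boundary in each one) are used here. Also, since the number of initial components $|\mathcal K|$ is only bounded by $|B_{m+\sqrt n}| = n^{O(1)}$, I need that a single successful sprinkling step reduces the component count by a constant \emph{factor} (not just by one); this is the standard Benjamini–Tassion mechanism: in a sub-annulus, with constant probability a constant fraction of the current components each get merged into another, because each such component has many boundary edges in distinct $Q_k^z$ and hence a good chance of being linked up. Then $O(\log(n^{O(1)})) = O(\log n)$ successful steps suffice, and among $\asymp\sqrt n$ independent trials each succeeding with probability $\ge\delta$ we get $\ge c\sqrt n \gg \log n$ successes except with probability $C\exp(-c\sqrt n)$.

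The main obstacle I expect is precisely the loss of independence between the candidate merging edges: unlike in Bernoulli percolation, two edges $e,e'$ in the \emph{same} box $Q_k^z$ are perfectly negatively correlated (at most one is open), so I must be careful to select, for each component and each sub-annulus, a family of candidate edges lying in \emph{pairwise distinct} boxes $Q_k^z$, and to argue there are enough of them — this is where I invoke that the edge boundary of a set crossing an annulus of inner radius $\asymp m \asymp n$ contains $\Omega(n^{d-1})$ edges hence $\Omega((n/k)^{d-1})$ distinct boxes, far more than needed. Combined with the independence of the chosen open edges across distinct boxes and across the $4d$-fold (here $\asymp\sqrt n$-fold) disjoint sub-annuli, this recovers enough independence to push the Benjamini–Tassion estimates through. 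Everything else — the renormalization reduction to \eqref{eqn:connection very likely}, and the large-deviation bookkeeping — is routine once this independence-via-geometry step is in place.
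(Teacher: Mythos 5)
Your route is genuinely different from the paper's and can be made to work, but it is not complete as written. For comparison: the paper keeps only $4d$ thick disjoint annuli of width $\sqrt{n}/4d$ (separated by $2k$-buffers) and shows (\cref{cl: sprinkling}) that in each one the number of components drops by a factor $n^{1/4}$, or to $1$, with probability $1-C\exp(-c\sqrt{n})$: any nontrivial union $W$ of current components has at least one boundary edge at every $\ell_\infty$-level of that annulus, hence at least $\sqrt{n}/8d$ boundary edges there, hence at least $\sqrt{n}/(8d^2(2k)^d)$ of them in pairwise distinct boxes $Q_k^z$, and a union bound over the at most $(16dn)^{d n^{1/4}}$ unions of at most $n^{1/4}$ components is beaten by $\exp(-c\sqrt{n})$. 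You instead use $\asymp \sqrt{n}/k$ thin annuli, a constant-probability constant-factor reduction per annulus, and a Chernoff count of successes; since $O(\log n)$ successes suffice and you have $\asymp\sqrt{n}$ independent trials, this also yields $1-C\exp(-c\sqrt{n})$. The trade-off is a weaker per-step estimate compensated by many more steps, versus the paper's strong per-step estimate that makes $4d$ steps and no large-deviation bookkeeping suffice.

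Two concrete gaps need repair. First, your isoperimetric claim is false: a crossing component can be a single radial path, whose edge boundary in the whole annulus has size $\Theta(\sqrt{n})$, not $\Omega(n^{d-1})$, and inside a thin sub-annulus of width $\asymp k$ it has only $\Theta(k)$ boundary edges, which may all lie in $O(1)$ boxes $Q_k^z$; so ``many distinct boxes per sub-annulus'' is not available. What is true, and all you need, is at least one boundary edge per $\ell_\infty$-level (using connectivity of the level spheres and the existence of a second crossing component), hence at least one candidate box per component per sub-annulus and merge probability at least $\eps/(d(2k)^d)$. Second, the central step ``with constant probability a constant fraction of the components merge'' is asserted rather than derived, and ``each component has a good chance of being linked up'' does not imply it, since the merge events are correlated (shared boxes and shared boundary edges). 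It does follow, but you must (i) invoke the hypothesis that every vertex of $B_{m+\sqrt{n}}$ lies in a crossing component --- this is exactly what the parenthetical ``no isolated vertices'' encodes --- so that an open boundary edge of a component necessarily joins it to a \emph{different} component (otherwise an open edge could merely attach a stray vertex and the count would not drop); and (ii) run a first-moment argument: the expected number of unmerged components is at most $(1-\delta')K$, so by Markov's inequality, with probability bounded below a definite fraction of the components merge and the count drops by a constant factor; this holds conditionally on the earlier sub-annuli because the box percolations there are independent (the $2k$-buffers) and merged components still cross the annulus. With these two fixes, plus the routine domination of the number of successful steps by a binomial, your argument closes and gives constants $c,C$ depending only on $k,d,\eps$, not on $H$.
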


For the proof we will fix $m$ and $n$ and for every $0\leq \ell \leq 4d-1$, let 
\[
A_\ell = A_{m+\ell\frac{\sqrt{n}}{4d} + 2k, m+(\ell+1)\frac{\sqrt{n}}{4d}}.
\]
Note that these annuli are disjoint and there is also a ``buffer" of side length $2k$ between disjoint annuli,  so the structure $\percok(\epsilon) \cap A_{\ell}$ is independent for distinct $\ell$.

To prove \cref{lem: first bt lemma} we will use the following claim.

	\begin{claim}\label{cl: sprinkling}
Let $H$ be as in \cref{lem: first bt lemma}, write $H_{\ell}$ for the graph with the same vertex set as $B_{m+\sqrt{n}}$, but endowed with the edge set $H \cup \left(\percok(\eps) \cap \left(\cup_{i=1}^{\ell} A_i\right)\right)$, and write $K(H_\ell)$ for the number of connected components of $H_{\ell}$. Then, there exist $C< \infty$ and $c>0$, such that for every $0\leq \ell \leq 4d-1$, given $\left(\percok(\eps) \cap \left(\cup_{i=1}^{\ell} A_i\right)\right)$ we have that
	\[
	K(H_{\ell+1}) \leq \max\{K(H_\ell)/n^{\nicefrac{1}{4}}, 1\}
	\]
 with probability at least $1-C\exp(-c\sqrt{n})$.
	\end{claim}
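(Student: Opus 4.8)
The plan is to carry out the sprinkling step of \cite{BenTass}, using the geometry of $\ZZ^d$ in place of the independence of distinct percolation edges available there. Condition on $\percok(\eps)\cap\bigl(\bigcup_{i=1}^{\ell}A_i\bigr)$; this determines $H_\ell$, and, as noted after the statement of \cref{lem: first bt lemma}, the configuration $\percok(\eps)\cap A_{\ell+1}$ is independent of this conditioning. Let $\mathcal{C}_1,\dots,\mathcal{C}_K$ be the connected components of $H_\ell$, with $K=K(H_\ell)$. Since $H\subseteq\ZZ^d$, each of its components meets both $B_m$ and $\partial B_{m+\sqrt n}$, and $H\subseteq H_\ell$, every $\mathcal{C}_j$ is a connected subgraph of $\ZZ^d$ containing a vertex of $\ell_\infty$-norm $\le m$ and a vertex of $\ell_\infty$-norm $m+\sqrt n$; consequently $\mathcal{C}_j$ meets every $\ell_\infty$-sphere of integer radius lying in $A_{\ell+1}$. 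Let $\mathcal{G}'$ be the random graph on $\{1,\dots,K\}$ with $i\sim j$ iff some edge of $\percok(\eps)\cap A_{\ell+1}$ joins $\mathcal{C}_i$ to $\mathcal{C}_j$; contracting each $\mathcal{C}_i$ shows $K(H_{\ell+1})$ equals the number of connected components of $\mathcal{G}'$, so it suffices to control the latter.

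The deterministic ingredient is a crude isoperimetric bound in the thin shell $A_{\ell+1}$ of width $w=\tfrac{\sqrt n}{4d}-2k$. For any nonempty proper $T\subset\{1,\dots,K\}$ the sets $\bigcup_{j\in T}\mathcal{C}_j$ and $\bigcup_{j\notin T}\mathcal{C}_j$ partition $A_{\ell+1}$ and, by the previous paragraph, both meet every $\ell_\infty$-sphere of integer radius in $A_{\ell+1}$; for each of the $\ge w-2$ such radii $r$ there is thus a nearest-neighbour path inside the (connected) sphere $\{\|x\|_\infty=r\}$ from $\bigcup_{j\in T}\mathcal{C}_j$ to $\bigcup_{j\notin T}\mathcal{C}_j$, and this path uses an edge of $A_{\ell+1}$ running between these two sets. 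These ``separating'' edges are distinct for distinct $r$, so $A_{\ell+1}$ contains at least $w-2$ of them, lying — since each box $Q_k^z$ has at most a constant number $D_k=D_k(k,d)$ of edges — in at least $(w-2)/D_k$ distinct boxes. Each such box independently selects and retains one of its $\ge1$ separating edges with probability at least $\epsilon':=\epsilon/D_k$, which creates an edge of $\mathcal{G}'$ between $T$ and its complement. Hence, for $n$ large enough that $w-2\ge\sqrt n/(8d)$,
\[
  \pr{\text{no edge of }\mathcal{G}'\text{ joins }T\text{ to }\{1,\dots,K\}\setminus T}\;\le\;(1-\epsilon')^{(w-2)/D_k}\;\le\;e^{-c_2\sqrt n},
\]
for a constant $c_2=c_2(k,d,\eps)>0$, uniformly over the conditioning.

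The next step is a union bound over $T$, but taken only over \emph{small} $T$ — this is what converts ``no component is isolated'' into the gain by the factor $n^{1/4}$. As $K\le|B_{m+\sqrt n}|\le(17dn)^d$, the number of $T$ with $\emptyset\ne T\subsetneq\{1,\dots,K\}$ and $|T|<n^{1/4}$ is at most $n^{1/4}K^{n^{1/4}}=\exp\bigl(O(n^{1/4}\log n)\bigr)=e^{o(\sqrt n)}$; so for $n\ge n_0(k,d,\eps)$ the event $\mathcal{E}$ that every such $T$ has an outgoing $\mathcal{G}'$-edge satisfies $\pr{\mathcal{E}^c}\le Ce^{-c\sqrt n}$ for suitable $C,c$. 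On $\mathcal{E}$, any connected component of $\mathcal{G}'$ other than $\{1,\dots,K\}$ itself has at least $n^{1/4}$ vertices, so either $\mathcal{G}'$ is connected or it has at most $K/n^{1/4}$ components; in both cases $K(H_{\ell+1})\le\max\{K(H_\ell)/n^{1/4},1\}$, which is the claim.

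The crux — and the only real departure from \cite{BenTass}, where one exploits independence of the percolation on distinct edges of the component graph — is the shell isoperimetry of the second paragraph: because at most one edge per box $Q_k^z$ is ever retained, one must first show that any small family of components of $H_\ell$ is separated from the rest inside $A_{\ell+1}$ by $\gtrsim\sqrt n$ lattice edges spread over $\gtrsim\sqrt n$ distinct boxes. The crossing property of the $\mathcal{C}_j$ (inherited from $H$) together with the connectedness of $\ell_\infty$-spheres supplies exactly this; the remainder is a Chernoff estimate plus the union bound restricted to $|T|<n^{1/4}$.
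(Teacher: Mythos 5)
Your proof is correct and takes essentially the same route as the paper's: a boundary bound of order $\sqrt n$ in the fresh annulus obtained from the crossing property (one separating edge per $\ell_\infty$-level, spread over $\gtrsim\sqrt n$ distinct boxes $Q_k^z$), an exponential estimate using independence of the boxes, and a union bound restricted to unions of fewer than $n^{\nicefrac14}$ components. Your packaging via the contracted component graph $\mathcal{G}'$ merely streamlines the paper's case split between $K(H_\ell)>n^{\nicefrac14}$ and $K(H_\ell)\le n^{\nicefrac14}$, and your sphere-connectivity argument just spells out the level-$r$ boundary-edge existence that the paper states more tersely.
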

	\begin{proof} By our construction the edge set of $H_\ell$ is a subset of $H_{\ell+1}$ so every component of $H_{\ell+1}$ will be a union of connected components of $H_\ell$. We may also assume $H_{\ell}$ is disconnected, otherwise $K(H_{\ell+1})=1$. We claim that every non-trivial union $W$ of connected components of $H_\ell$ must have edge boundary satisfying
$$|\edgebdy W| \cap E(A_{\ell+1}) \geq \sqrt{n}/4d - 2k \geq \sqrt{n}/8d \, ,$$
(when $n$ is large enough). Indeed, since every connected component of $H$ intersects both $B_m$ and $\partial B_{m+\sqrt{n}}$ and since $W$ is non-trivial for every level $r$ in the annulus $A_{\ell+1}$ (that is, for every $r\in [m+(\ell+1)\frac{\sqrt{n}}{4d} + 2k, m+(\ell+2)\frac{\sqrt{n}}{4d}]$), there must be at least one edge in $\ZZ^d$ between $W$ and $W^c$ that has a vertex $v\in W$ with $||v||_\infty=r$ . In particular, the size of $W$'s edge boundary intersected with $A_{\ell + 1}$ is at least the radius of $A_{\ell + 1}$.

		We write $C_1, \ldots , C_{K(H_\ell)}$ for the connected components using the edgeset of $H_\ell$ and take any non-trivial union of $N<K(H_{\ell})$ of these connected components $W=C_{i_1} \cup \ldots \cup C_{i_N}$ and choose $\sqrt{n}/8d$ edges in the edge boundary of $W$ that are also in $A_{\ell+1}$, as guaranteed to exist by the previous paragraph. As every box of the form $Q_k^z$ has at most $d(2k)^d$ edges, we can find a subset of at least $\sqrt{n}/(8d^2(2k)^d)$ boundary edges each belonging to distinct $Q_k^z$. It follows that the probability that $W$ is a connected component in $H_{\ell+1}$ is bounded from above by the probability that all these $\sqrt{n}/(8d^2(2k)^d)$ edges are closed (since $W$ is a connected component). This probability is bounded by 
		\begin{equation*}
			\left(1-\frac{\eps}{d(2k)^d}\right)^{\frac{\sqrt{n}}{8d^2(2k)^d}} \leq \exp\left(-\frac{\eps\sqrt{n}}{8d^3(2k)^{2d}}\right).
		\end{equation*}
		If $K(H_\ell) > n^{\nicefrac{1}{4}}$ and $K(H_{\ell+1}) > K(H_\ell) / n^{\nicefrac{1}{4}}$ then there would have to be a connected component $W$ in $H_{\ell+1}$ which is a union of at most $n^{\nicefrac{1}{4}}$ connected components of $H_\ell$. The probability that this latter event can occur is at most
		\begin{equation}\label{eq: bound for small union}
			\sum_{i=1}^{{n^{\nicefrac{1}{4}}}}\binom{K(H_\ell)}{i}\exp\left(-\frac{\eps\sqrt{n}}{8d^3(2k)^{2d}}\right) \leq n^{\nicefrac{1}{4}}(16dn)^{dn^{\nicefrac{1}{4}}}\exp\left(-\frac{\eps\sqrt{n}}{8d^3(2k)^{2d}}\right)  \leq C\exp\left(-c\sqrt{n}\right),
		\end{equation}
		for some $c>0, C<\infty$ not depending on $n$ or $\ell$, where we have crudely bounded $\binom{K(H_\ell)}{i}\leq |K(H_\ell)|^{n^{\nicefrac{1}{4}}}$ and $|K(H_\ell)| \leq (2(m+\sqrt{n}))^d \leq (16dn)^d$. 
		
		If instead $1 < K(H_\ell) \leq n^{\nicefrac{1}{4}}$ it is only necessary to show that $K(H_{\ell+1}) = 1$ with the same high probability. This follows the same calculation. Indeed, otherwise there would be a non-trivial union of at most $K(H_\ell)$ components which is a component of $H_{\ell+1}$. There are at most $\exp(n^{\nicefrac{1}{4}})$ possibilities for such a union, and again each having probability at most $C\exp(-c\sqrt{n})$. 
			\end{proof}
			
\cref{lem: first bt lemma} now follows straightforwardly. %

\begin{proof}[Proof of \cref{lem: first bt lemma}]
		First note that $H$ has at most $O(m^{d-1})$ connected components (since every component runs through a boundary vertex of $B_m$). Moreover, we can assume that this number of components is less than $n^d$ provided we chose $n_0$ large enough. We can therefore  apply \cref{cl: sprinkling} $4d$ times, once for each $\ell$, and apply a union bound to deduce that with probability at least $1-4dC\exp(-c\sqrt{n})$, the number of components is upper bounded by $\frac{n^d}{n^{4d \cdot 0.25}}=1$.
\end{proof}

For the next lemma, we introduce the following notation for $j \leq \ell$ and a subgraph $X \subset \ZZ^d$: we let $U_{j,\ell}(X)$ denote the number of connected components $K$ of $X$ which have a vertex $v\in H$ with $\|v\|_{\infty} \leq \ell$ but all other vertices $u$ have $||u||_\infty \geq j$. Note that $U_{0,\ell}(X)$ is the number of connected component $K$ of $X$ that contain some $v$ with $\|v\|_{\infty} \leq \ell$.

\begin{lemma}\label{lem: add perco to sqrtn}
Consider an everywhere percolating graph $\Lambda$, restrict it to $B_{8dn}$ and denote the resulting graph by $X$. Then there exist $c>0, C<\infty$ and $n_0 < \infty$ such that for all $n_0 \leq n\leq m \leq m+2\sqrt{n} \leq 8dn$,
\[
U_{0,m}(X\cup (\percok(\eps)\cap A_{m+2k,m+2\sqrt{n}})) \leq \max \left\{ U_{m,m+2\sqrt{n}}(X), 1 \right\}
\]
with probability at least $1-C\exp(-c\sqrt{n})$.
\end{lemma}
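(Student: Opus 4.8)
The plan is to mirror the sprinkling scheme of \cref{cl: sprinkling}; the new feature is that, unlike the graph $H$ of \cref{lem: first bt lemma}, the graph $X=\Lambda\cap B_{8dn}$ has many components not reaching across the annulus, and the everywhere-percolating hypothesis is what lets us control the ones that do. Write $\mathcal A:=A_{m+2k,m+2\sqrt n}$ and $Y:=X\cup(\percok(\eps)\cap\mathcal A)$, and let $C_1,\dots,C_N$ (with $N=U_{0,m}(X)$) be the components of $X$ meeting $B_m$. The first step is to record that every $C_i$ crosses $\mathcal A$: being contained in an infinite component of $\Lambda$, $C_i$ reaches a vertex of $\ell^\infty$-norm $8dn$ — follow a $\Lambda$-path from a vertex of $C_i\cap B_m$ to infinity and take its last vertex before it leaves $B_{8dn}$ — so since the norm changes by at most one along an edge and $m\le m+2k\le m+2\sqrt n\le 8dn$, the component $C_i$ meets the sphere $\Sigma_r:=\{\|x\|_\infty=r\}$ for every $r\in\{m+2k,\dots,m+2\sqrt n\}$; moreover $N\le|\{\|x\|_\infty=8dn\}|\le C_d\,n^{d-1}$ since distinct $C_i$'s meet that sphere in disjoint sets. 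Since every component of $Y$ meeting $B_m$ contains some $C_i$, the quantity $U_{0,m}(Y)$ is exactly the number of classes into which ``lie in the same $Y$-component'' partitions $\{C_1,\dots,C_N\}$, and it is this number that must be controlled.

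The core estimate is geometric. Suppose $\bar W$ and $\bar W'$ are two distinct components of $Y$, each meeting $B_m$, say with $C_i\subseteq\bar W$ and $C_j\subseteq\bar W'$. By the previous paragraph both meet $\Sigma_r$ for every $r$ in the annulus; as $\Sigma_r$ induces a connected subgraph of $\ZZ^d$, there is an edge $e_r\subseteq\Sigma_r$ joining $\bar W$ to its complement. Such an $e_r$ is not an edge of $X$ — its endpoints would then lie in one $X$-component, hence both in $\bar W$ — so $e_r\in\mathcal A$, $e_r\notin Y$, and, $\bar W$ being a $Y$-component, $e_r$ is closed in $\percok(\eps)$. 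The $e_r$ for distinct $r$ are distinct, giving $\ge 2\sqrt n-2k\ge\sqrt n$ closed edges of $\mathcal A$; since $|E(Q_k^z)|\le d(2k)^d$, at least $\sqrt n/(d(2k)^d)$ of them lie in pairwise distinct boxes, and for each such box the probability that its (independent, uniform) chosen edge is the relevant crossing edge \emph{and} is open is $\ge\eps/(d(2k)^d)$. Hence for a \emph{fixed} pair $(\bar W,\bar W')$ this configuration has probability at most $(1-\eps/(d(2k)^d))^{\sqrt n/(d(2k)^d)}\le e^{-c\sqrt n}$.

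The main obstacle is turning this into a bound valid over all $\bar W$: there are up to $2^N$ unions of the $C_i$'s, far too many against the $e^{-c\sqrt n}$ gain. I would handle this as one does for clusters in subcritical-type arguments, by anchoring and exploring: fix $v\in B_m$, explore the $Y$-component $K_v$ of $v$ box by box, revealing a box's chosen edge and its state only when the growing set first meets it; at termination every box carrying an edge of $\partial_E K_v$ has been inspected and found not to grow $K_v$ beyond itself, distinct boxes carry independent samples, and if $\ZZ^d\setminus K_v$ still meets $B_m$ then by the core estimate at least $\sqrt n/(d(2k)^d)$ of these boxes were ``blocked,'' an event of probability at most $e^{-c\sqrt n}$. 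As $U_{0,m}(Y)\ge 2$ forces some such $v$, a union bound over the $\le(16dn)^d$ choices of $v$ gives $\pr{U_{0,m}(Y)\ge 2}\le(16dn)^d e^{-c\sqrt n}\le Ce^{-c'\sqrt n}$, so with the required probability one even has $U_{0,m}(Y)\le 1\le\max\{U_{m,m+2\sqrt n}(X),1\}$.

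If one prefers to stay strictly within the framework of \cref{cl: sprinkling} and avoid the exploration, the alternative is to split $\mathcal A$ into $4d$ sub-annuli and iterate the $n^{1/4}$-reduction, using $N\le C_d n^{d-1}$ so that $4d$ steps suffice to collapse the count; there one must additionally account for components of $X$ that do \emph{not} meet $B_m$ but that can interfere with the merging of two $C_i$'s (the sphere $\Sigma_r$ is no longer covered by the $C_i$'s), and each such interfering component has a vertex of norm $\le m+2\sqrt n$ and all vertices of norm $>m$, hence is one of the components counted by $U_{m,m+2\sqrt n}(X)$ — bookkeeping them is exactly what produces the residual term on the right-hand side of the statement. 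Either way, the two load-bearing ingredients are the geometric estimate of the second paragraph together with a device — anchoring, or restricting to small unions — that cuts the number of relevant candidate clusters down to a polynomial in $n$; making the exploration (or the small-union accounting with the interfering components) fully rigorous in the box-percolation setting is the step I expect to require the most care.
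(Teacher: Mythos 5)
Your geometric ``core estimate'' (one closed boundary edge per sphere $\Sigma_r$, hence $\gtrsim\sqrt n/(d(2k)^d)$ closed edges in distinct boxes for any fixed union of $X$-components whose complement also crosses) is sound and is in fact the same boundary estimate the paper uses. The gap is in the step that turns it into a probability bound. Your anchored exploration does not deliver $\pr{U_{0,m}(Y)\ge 2}\le Ce^{-c\sqrt n}$: the event ``at least $\sqrt n/(d(2k)^d)$ of the inspected boxes were blocked'' is not rare at all, since \emph{every} $Y$-component has an entirely closed boundary and typical clusters have huge boundaries, so many blocked boxes is the generic outcome. What you actually need is that the specific one-edge-per-sphere family in $\edgebdy K_v$ was failed by its boxes; but those target edges are selected \emph{after} the exploration, as a function of the final cluster, i.e.\ as a function of the very box samples you are trying to bound, so you cannot multiply conditional probabilities $1-\eps/(d(2k)^d)$ box by box. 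Unwinding the exploration only gives the fixed-cluster bound $\pr{K_v=W}\le(1-\eps')^{\#\{\text{boundary boxes of }W\}}$, and you are then back to a union over up to $2^{\Theta(n^{d-1})}$ unions of $X$-components (each $X$-component reaches $\partial B_{8dn}$, so there can be order $n^{d-1}$ of them meeting the annulus), which the $e^{-c\sqrt n}$ gain cannot absorb --- exactly the obstacle you flagged and did not overcome. Note also that your main route is trying to prove a statement strictly stronger than the lemma, namely $U_{0,m}(Y)\le 1$ with the stated probability uniformly over $\Lambda$; the authors deliberately avoid claiming this, and the term $\max\{U_{m,m+2\sqrt n}(X),1\}$ is in the statement precisely because annulus-crossing components of $X$ that do not meet $B_m$ may act as intermediaries whose merging pattern one cannot control with percolation confined to this thin annulus.

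The paper's proof is, in essence, your fallback paragraph made precise, and the part you defer is exactly its content: split into the case $U_{m,m+\sqrt n}(X)=0$, where every component of $X$ in $B_{m+\sqrt n}$ meets $B_m$ and \cref{lem: first bt lemma} applies directly, and the case $U_{m,m+\sqrt n}(X)\ge 1$, where one contracts \emph{all} components of $X|_{B_{m+2\sqrt n}}$ not meeting $B_m$ into a single special vertex, checks (using $U_{m,m+\sqrt n}(X)\ge1$ so that the special vertex also carries an annulus-crossing piece) that every non-trivial union of components of the contracted graph still has $\ge\sqrt n/8d$ boundary edges in each shifted sub-annulus $A'_{\ell+1}$, runs the $4d$-fold sprinkling of \cref{cl: sprinkling} (this is \cref{cl:component reduction special case}), and only union-bounds over unions of at most $n^{1/4}$ components per step --- which is how the entropy problem is actually beaten. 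Un-contracting the special vertex can then split the single resulting component into at most $U_{m,m+2\sqrt n}(X)$ pieces, which is what produces the right-hand side of the lemma. To repair your write-up you should either carry out this contraction-plus-sprinkling bookkeeping in full, or give a genuinely different argument for the strengthened claim; as it stands, the decisive probabilistic step is missing.
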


Note that if $U_{m,m+\sqrt{n}}(X) = 0$ then the result follows from \cref{lem: first bt lemma}; hence we only need to deal with the case $U_{m,m+\sqrt{n}}(X) \geq 1$. To deal with this case we will essentially follow the same proof strategy as in the proof of \cref{lem: first bt lemma}, but we will consider all components that contribute to $U_{m,m+2\sqrt{n}}(X)$ as one. Here, instead of $A_\ell$ we use $A'_\ell$, defined for  $0\leq \ell \leq 4d-1$ as
\[
A_{\ell}' = A_{m+\sqrt{n}+\ell\frac{\sqrt{n}}{4d} + 2k, m+\sqrt{n}+(\ell+1)\frac{\sqrt{n}}{4d}}.
\]
(Note that the difference with $A_{\ell}$ is the shift by $\sqrt{n}$.) We will need the following claim for the proof.
 
\begin{claim}\label{cl:component reduction special case}
There exist $c>0, C<\infty$ and $n_0 = n_0(k,d)<\infty$ such that the following holds. Let $n_0 \leq n\leq m \leq m+\sqrt{n} \leq 8dn$, take $X$ as in \cref{lem: add perco to sqrtn} and suppose that $U_{m,m+\sqrt{n}}(X) \geq 1$. Let $H$ be the restriction of $X$ to $B_{m+2\sqrt{n}}$.

Define $H'$ to be the graph obtained from $H$ by taking all components not intersecting $B_m$ and contracting them into a single vertex that we call the special component. Write $K(H'_\ell)$ for the number of connected components with the edge set $H'_\ell := H' \cup \left(\percok(\eps) \cap \left(\cup_{i=1}^{\ell} A'_i\right)\right)$. Then, for every $0\leq \ell \leq 4d-1$, we have that
	\[
	K(H'_{\ell+1}) \leq \max\{K(H'_\ell)/n^{\nicefrac{1}{4}}, 1\}
	\]
with probability at least $1-C\exp(-c\sqrt{n})$.
\end{claim}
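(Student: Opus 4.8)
The plan is to run the argument of \cref{cl: sprinkling} essentially verbatim, the only new ingredient being the bookkeeping forced by the contraction of the special component. As there, since $E(H'_\ell)\subseteq E(H'_{\ell+1})$, every component of $H'_{\ell+1}$ is a union of components of $H'_\ell$, and we may assume $H'_\ell$ is disconnected; if the asserted inequality fails then, by the same pigeonhole step as in \cref{cl: sprinkling}, there is a \emph{proper, nonempty} union $W$ of at most $n^{1/4}$ components of $H'_\ell$ that is itself a component of $H'_{\ell+1}$. There are at most $\sum_{i\le n^{1/4}}\binom{K(H'_\ell)}{i}$ candidates for such $W$ when $K(H'_\ell)>n^{1/4}$, and at most $2^{n^{1/4}}$ candidates when $1<K(H'_\ell)\le n^{1/4}$; so it suffices to bound, uniformly in $W$, the probability that $W$ is a component of $H'_{\ell+1}$ by $\exp(-c\sqrt n)$ and apply a union bound, using $K(H'_\ell)\le |B_{m+2\sqrt n}|\le (Cn)^d$ and $n^{1/4}(Cn)^{dn^{1/4}}=\exp(o(\sqrt n))$.

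Fix such a $W$. Let $S\subseteq B_{m+2\sqrt n}$ be the union of the vertex sets of the components of $H$ not meeting $B_m$ (these are exactly the components collapsed to the special vertex in $H'$), and let $\widehat W\subseteq B_{m+2\sqrt n}$ be the \emph{de-contraction} of $W$, obtained by replacing the special vertex, if it lies in $W$, by all of $S$. Two observations turn ``$W$ is a component of $H'_{\ell+1}$'' into an honest cut statement for $\widehat W$ in $\ZZ^d$. First, an open edge of $\percok(\eps)\cap A'_{\ell+1}$ descends to an edge of $H'_{\ell+1}$ unless both its endpoints lie in $S$, and in either case it cannot join $\widehat W$ to $\widehat W^c$; hence if $W$ is a component of $H'_{\ell+1}$ then no open edge of $\percok(\eps)\cap A'_{\ell+1}$ lies in $\edgebdy\widehat W$. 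Second --- the key geometric point, extending the boundary estimate of \cref{cl: sprinkling} --- every component of $H'_\ell$, viewed via de-contraction as a subset of $B_{m+2\sqrt n}$, contains a vertex at \emph{every} level $r$ of the annulus $A'_{\ell+1}$: a non-special component contains an entire component of $H$ meeting $B_m$, which (since $\Lambda$ is everywhere percolating, so that every component of $H$ reaches $\partial B_{m+2\sqrt n}$) is connected and spans from $B_m$ up to level $m+2\sqrt n$ and hence crosses $A'_{\ell+1}$; and the special component contains $S$, which by the hypothesis $U_{m,m+\sqrt n}(X)\ge 1$ contains a component of $H$ with a vertex at level $\le m+\sqrt n$ that likewise reaches $\partial B_{m+2\sqrt n}$ and hence also crosses $A'_{\ell+1}$. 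Since $W$ is proper and nonempty, it follows that $\widehat W$ and $\widehat W^c$ each contain a vertex at every level $r$ of $A'_{\ell+1}$; exactly as in \cref{cl: sprinkling} --- using that the subgraph of $\ZZ^d$ induced on a level set $\{v:\|v\|_\infty=r\}$ is connected --- this forces a cut edge with both endpoints at level $r$, and so $|\edgebdy\widehat W\cap E(A'_{\ell+1})|\ge \sqrt n/4d-2k\ge \sqrt n/8d$ once $n\ge n_0$, whence we may extract at least $\sqrt n/(8d^2(2k)^d)$ of these boundary edges lying in pairwise distinct boxes $Q_k^z$.

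With these two observations the probability bound is identical to that in \cref{cl: sprinkling}: the extracted boundary edges lie in distinct boxes $Q_k^z$, which are sampled independently, so
\[
\pr{W\text{ is a component of }H'_{\ell+1}}\le\Bigl(1-\tfrac{\eps}{d(2k)^d}\Bigr)^{\sqrt n/(8d^2(2k)^d)}\le\exp\!\Bigl(-\tfrac{\eps\sqrt n}{8d^3(2k)^{2d}}\Bigr),
\]
and summing this over the at most $n^{1/4}(Cn)^{dn^{1/4}}$ (resp.\ $2^{n^{1/4}}$) candidate sets $W$ yields the claim with suitable constants $c>0$, $C<\infty$ and $n_0=n_0(k,d)<\infty$ not depending on $n$ or $\ell$.

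The part to watch --- essentially the only genuine difference from \cref{cl: sprinkling} --- is the treatment of the special component: one must verify both that, after contraction, ``$W$ is a component of $H'_{\ell+1}$'' still reads as a genuine cut condition on $\widehat W$ in $\ZZ^d$, and, crucially, that whichever side of the cut contains the special vertex still \emph{spans} the sprinkling annulus $A'_{\ell+1}$ and therefore has many boundary edges there. The latter is exactly the role of the hypothesis $U_{m,m+\sqrt n}(X)\ge 1$: it provides a component of $H$ inside $S$ that descends to level $\le m+\sqrt n$ --- below the inner radius of every $A'_i$ --- so that $S$, and hence that side of any cut, genuinely reaches into each of the annuli; without it, $S$ could lie entirely near $\partial B_{m+2\sqrt n}$ and contribute no boundary edges in the earlier annuli.
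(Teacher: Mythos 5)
Your proof is correct and takes essentially the same route as the paper: it reruns the union-bound and boundary-edge argument of \cref{cl: sprinkling}, with the only new ingredient being that the special (contracted) component crosses every annulus $A'_{\ell+1}$ because, by $U_{m,m+\sqrt{n}}(X)\geq 1$ and the everywhere percolating property, it contains a component of $H$ reaching from level at most $m+\sqrt{n}$ to $\partial B_{m+2\sqrt{n}}$. Your de-contraction bookkeeping and the level-set connectivity argument for extracting boundary edges in distinct boxes $Q_k^z$ are just more explicit versions of what the paper leaves implicit.
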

\begin{proof}
This follows by essentially the same proof as \cref{cl: sprinkling}. Consider some $W$ which is a non-trivial union of connected components of $H_\ell'$. If $W$ consists only of components that traverse $A_{m,m+2\sqrt{n}}$, then it must have an edge-boundary of size at least $\sqrt{n}/8d$ in $A'_{\ell+1}$. If it contains the special component, then this special component must also have such an edge-boundary in $A'_{\ell+1}$, as the special component contains an original component that traversed $A_{m+\sqrt{n}, m+2\sqrt{n}}$ since $U_{m,m+\sqrt{n}}(X) \geq 1$. We can therefore apply the same calculation as in \eqref{eq: bound for small union} to deduce the result.
\end{proof}

\begin{proof}[Proof of \cref{lem: add perco to sqrtn}]
We distinguish between two cases, starting with the case $U_{m,m+\sqrt{n}}(X) = 0$. 
This means that all connected components in $B_{m+\sqrt{n}}$ have a vertex in $B_m$ and hence the result follows from \cref{lem: first bt lemma}.

Otherwise, $U_{m,m+\sqrt{n}}(X) \geq 1$. Take $H$ and $H'$ as in the statement of \cref{cl:component reduction special case}. Similarly to the proof of \cref{lem: first bt lemma}, we can apply \cref{cl:component reduction special case} $4d$ times to obtain that in $H' \cup \left(\percok(\eps) \cap \left(\cup_{i=1}^{4d-1} A'_i\right)\right)$, we have at most one connected component with at least the desired probability. The reason that $U_{0,m}(X\cup (\percok(\eps)\cap A_{m+2k,m+2\sqrt{n}}))$ may possibly be a lot larger is because when expanding the special component, we might further disconnect the graph. However this can create at most $U_{m,m+2\sqrt{n}}(H)-1$ new components, as required.
\end{proof}

The proofs of the next two lemmas are \textbf{exactly} the same as \cite[Lemma 3.2]{BenTass} and \cite[Lemma 1.1]{BenTass}. We include their short proofs here for completeness.

\begin{lemma}\label{lemma: smaller than sqrtn}
	There exist $c>0, C<\infty$ and $n_0 = n_0(k,d)<\infty$ such that the following holds. Take any everywhere percolating graph $\Lambda$. Let $n_0 \leq n \leq  m \leq m+2n \leq 8dn$. Then,
	\begin{equation*}
		U_{0,m}(\Lambda\cup (\percok(\eps)\cap A_{m+2k,m+2n})) \leq \max\{U_{0,m+2n}(\Lambda)/\sqrt{n}, 1\} \, ,
	\end{equation*}
with probability at least $1-C\exp(-c\sqrt{n})$.
\end{lemma}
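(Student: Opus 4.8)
The plan is to interpolate between the scale $m+2n$ and the scale $m$ using roughly $\sqrt{n}$ applications of \cref{lem: add perco to sqrtn}, each of which strips off an annulus of width $2\sqrt{n}$ and reduces the relevant component count. Concretely, set $m_t = m + 2t\sqrt{n}$ for $t = 0, 1, \ldots, T$, where $T = \lfloor \sqrt{n} \rfloor$ (so that $m_T \leq m+2n \leq 8dn$, after adjusting constants and $n_0$ so all the scale constraints of \cref{lem: add perco to sqrtn} are met at each step). The annuli $A_{m_t + 2k, m_{t+1}} = A_{m_t+2k,\, m_t+2\sqrt{n}}$ for distinct $t$ are disjoint and separated by a buffer of width $2k$, so the relevant parts of $\percok(\eps)$ are mutually independent across $t$.

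The key step is the telescoping inequality coming from \cref{lem: add perco to sqrtn}: on the event (call it $G_t$) that the conclusion of \cref{lem: add perco to sqrtn} holds at scale $m_t$ with the annulus $A_{m_t+2k, m_t+2\sqrt{n}}$, we have
\[
U_{0,m_t}\big(\Lambda \cup (\percok(\eps) \cap A_{m+2k,m+2n})\big) \;\leq\; \max\{\, U_{m_t,\, m_{t+1}}(X_t),\ 1 \,\} \;\leq\; \max\{\, U_{0,m_{t+1}}(\Lambda),\ 1 \,\},
\]
where $X_t$ is $\Lambda$ restricted to $B_{8dn}$ and the last inequality is because any component counted by $U_{m_t, m_{t+1}}$ has a vertex of norm $\leq m_{t+1}$ and hence is counted by $U_{0,m_{t+1}}$ (monotonicity of $U_{0,\cdot}$ and the fact that adding more edges of $\percok(\eps)$ outside the current annulus only merges components — here one must be slightly careful and note that \cref{lem: add perco to sqrtn} is stated for $X \cup (\percok \cap A_{m+2k,m+2\sqrt n})$, and adding the further edges of $\percok(\eps) \cap A_{m+2n,\cdot}$ from the other annuli cannot increase any $U_{0,\cdot}$). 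Chaining this from $t=0$ up to $t=T-1$, on $\bigcap_t G_t$ we get $U_{0,m}(\cdots) \leq \max\{U_{0,m_T}(\Lambda), 1\} \leq \max\{U_{0,m+2n}(\Lambda),1\}$. That on its own is not quite the $\sqrt{n}$ gain we want, so instead I would run the argument so that one of the $4d$-fold sprinkling passes inside \cref{lem: add perco to sqrtn} is \emph{not} absorbed into the special component: more cleanly, repeat the single step once and observe that after stripping a single width-$2\sqrt n$ annulus the count drops by a factor of $n^{1/4}$ at each of the $4d$ sub-steps unless it already hit $1$ — but since we only need a factor $\sqrt n = n^{1/2} \le n^{4d\cdot 1/4}$ for $d\ge 1$, a \emph{single} application of \cref{lem: add perco to sqrtn} with $U_{m,m+2\sqrt n}(X)$ replaced by its bound already suffices once we note $U_{m,m+2\sqrt n}(X) \le U_{0,m+2\sqrt n}(\Lambda)$ and, more to the point, that $U_{0,m+2\sqrt n}(\Lambda)/\sqrt n$ bounds it when we use that the sprinkling inside the proof of \cref{lem: add perco to sqrtn} actually produces the stated $n^{4d/4} = n^d \ge \sqrt n$ reduction before the special component is re-expanded. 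I would therefore restate the chaining so that $U_{0,m}(\cdots) \le \max\{U_{0,m+2\sqrt n}(\Lambda)/\sqrt n,\, 1\}$ from a single step, then compose this $T$ times, the bound improving or staying at $1$ at each step, to land at $\max\{U_{0,m+2n}(\Lambda)/\sqrt n, 1\}$.

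For the probability bound, each $G_t$ has $\pr{G_t^c} \le C\exp(-c\sqrt n)$ by \cref{lem: add perco to sqrtn}, and there are at most $T \le \sqrt n$ of them, so a union bound gives $\pr{\bigcup_t G_t^c} \le \sqrt n\, C \exp(-c\sqrt n) \le C' \exp(-c'\sqrt n)$ after shrinking $c'$ slightly and enlarging $C'$; absorbing the polynomial factor is routine. One also needs to check the hypothesis ranges: each invocation of \cref{lem: add perco to sqrtn} at scale $m_t$ requires $n_0 \le n \le m_t$ and $m_t + 2\sqrt n \le 8dn$, both of which hold for all $t \le T$ once $n_0$ is large and since $m_T + 2\sqrt n \le m + 2n + 2\sqrt n \le 8dn$ after a harmless constant adjustment (alternatively take $m+2n \le 8dn - 2\sqrt n$; the statement as written can be met by choosing the number of steps $T$ slightly smaller than $\sqrt n$, which does not affect the asymptotics).

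The main obstacle I anticipate is bookkeeping rather than anything deep: making sure that (a) the edge sets $\percok(\eps) \cap A_{m+2k,m+2n}$ used in the conclusion genuinely decompose into the independent pieces $\percok(\eps)\cap A_{m_t+2k,m_t+2\sqrt n}$ plus leftover edges, with the leftover only ever merging components, so that the monotonicity $U_{0,m_t}(\text{full config}) \le U_{0,m_t}(\text{config up to annulus } t)$ holds; and (b) extracting a genuine $\sqrt n$ factor rather than just ``$\le \max\{U_{0,m+2n},1\}$'' — this is handled by noting that a single application of \cref{lem: add perco to sqrtn} already gives the $\sqrt n$ gain (its proof yields an $n^{d}\ge \sqrt n$ reduction on the non-special part before re-expansion, and the re-expansion only adds components that were already separately counted at the finer scale, so they do not survive to scale $m$ after the next strip). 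Everything else is a routine union bound over $O(\sqrt n)$ independent-ish events.
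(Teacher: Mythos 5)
There is a genuine gap, and it is exactly at the point you flagged yourself (``not quite the $\sqrt n$ gain we want''): your substitute argument for extracting the factor $\sqrt n$ is incorrect. \cref{lem: add perco to sqrtn} bounds $U_{0,m}$ of the sprinkled configuration by $\max\{U_{m,m+2\sqrt n}(X),1\}$, where $U_{m,m+2\sqrt n}(X)$ is precisely the number of components re-introduced when the special (contracted) vertex is expanded back; the internal $4d$-fold sprinkling reduces the \emph{contracted} graph to one component, but it says nothing about these re-expanded components. Your key claim that ``$U_{0,m+2\sqrt n}(\Lambda)/\sqrt n$ bounds $U_{m,m+2\sqrt n}(X)$'' is false in general: an everywhere percolating $\Lambda$ can be arranged so that almost all components meeting $B_{m+2\sqrt n}$ avoid $B_m$ (say, many disjoint outward rays started just outside $B_m$), in which case $U_{m,m+2\sqrt n}(X)$ is comparable to $U_{0,m+2\sqrt n}(\Lambda)$, with no $1/\sqrt n$ saving. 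Chaining the lemma over $T\approx\sqrt n$ shells does not repair this: since adding edges only merges components, the final bound collapses to a single application at the innermost shell, i.e.\ again $\max\{U_{m,m+2\sqrt n}(\cdot),1\}$ for one shell fixed in advance, and no shell chosen in advance is guaranteed to carry few components. Likewise, ``the re-expanded components do not survive to scale $m$ after the next strip'' has no justification: percolation in an inner annulus need not touch components confined to an outer shell.

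The missing idea is a pigeonhole over the shells, which is how the paper proceeds. Decompose
\begin{equation*}
U_{0,m+2n}(\Lambda)\;=\;U_{0,m}(\Lambda)\;+\;\sum_{\ell=1}^{\sqrt n} U_{m+2(\ell-1)\sqrt n,\,m+2\ell\sqrt n}(\Lambda),
\end{equation*}
since every component counted on the left is classified by the smallest $\|\cdot\|_\infty$-norm it attains. Hence some term is at most $U_{0,m+2n}(\Lambda)/\sqrt n$. If it is the first term, conclude by monotonicity ($U_{0,m}$ only decreases when $\percok(\eps)$ is added). Otherwise apply \cref{lem: add perco to sqrtn} \emph{once}, at that particular shell $\ell$, which gives $U_{0,m+2(\ell-1)\sqrt n}(\Lambda\cup(\percok(\eps)\cap A_{m+2(\ell-1)\sqrt n+2k,\,m+2\ell\sqrt n}))\le\max\{U_{m+2(\ell-1)\sqrt n,m+2\ell\sqrt n}(\Lambda),1\}\le\max\{U_{0,m+2n}(\Lambda)/\sqrt n,1\}$ with probability $1-C\exp(-c\sqrt n)$, and then finish by the two monotonicity facts $U_{0,m}\le U_{0,m+2(\ell-1)\sqrt n}$ and ``more open edges cannot increase $U_{0,\cdot}$''. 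Note that this route needs only one application of \cref{lem: add perco to sqrtn} and one good event, so your union bound over $\sqrt n$ steps, while harmless, is also unnecessary.
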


\begin{proof}
Note that 
	\begin{equation*}
		U_{0,m+2n}(\Lambda) = 	U_{0,m}(\Lambda) + \sum_{\ell=1}^{\sqrt{n}} U_{m+2(\ell-1)\sqrt{n},m+2\ell\sqrt{n}}(\Lambda).
	\end{equation*}
Hence there must be at least one term on the right hand side smaller than $U_{0,m+2n}(\Lambda)/\sqrt{n}$. If it's the first one, then the result is immediate since $U_{0,m}(\Lambda\cup (\percok(\eps)\cap A_{m+2k,m+2n})) \leq U_{0,m}(\Lambda)$. Otherwise we take some $\ell$ such that $U_{m+2(\ell-1)\sqrt{n}, m+2\ell\sqrt{n}}(\Lambda) \leq U_{0,m+2n}(\Lambda)/\sqrt{n}$.	 We then use \cref{lem: add perco to sqrtn} to obtain that
\[
U_{0,m+2(\ell-1)\sqrt{n}}(\Lambda\cup (\percok(\eps) \cap A_{m+2(\ell-1)\sqrt{n} + 2k, m+2\ell\sqrt{n}})) \leq \max \{ U_{m+2(\ell-1)\sqrt{n},m+2\ell\sqrt{n}}(\Lambda),1\} \leq \max \{ U_{0,m+2n}(\Lambda)/\sqrt{n} ,1\} 
\]
with probability at least $1-C\exp(-c\sqrt{n})$. The result then follows since
	\begin{equation*}
		U_{0,m}(\Lambda\cup (\percok(\eps)\cap A_{m+2k,m+2n})) \leq U_{0,m+2(\ell-1)\sqrt{n}}(\Lambda\cup (\percok(\eps) \cap A_{m+2(\ell-1)\sqrt{n} + 2k, m+2\ell\sqrt{n}})). \qedhere
	\end{equation*}
\end{proof}

Now we can give the proof of \eqref{eqn:connection very likely}.

\begin{lemma}
	Let $\eps>0, k \geq 2$. Then, there exist $C<\infty, c>0$ and $N$ such that for every $n\geq N$ and for every everywhere percolating graph $\Lambda$,
	\begin{equation*}
		\pr{\forall x,y\in B_n, x \text{ and } y \text{ are connected inside }B_{2n} \text{ through only } \Lambda\cup\percok(\eps)}{} \geq 1 - C\exp(-c\sqrt{n}).
	\end{equation*}
\end{lemma}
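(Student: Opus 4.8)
The plan is to bootstrap \cref{lemma: smaller than sqrtn}. Starting from the trivial polynomial bound $U_{0,\,\cdot}\le|B_{8dn}|$ on the number of clusters meeting a large box, each application of that lemma — which adds a fresh box percolation on one more annulus of width $2n$ — divides the number of relevant clusters by $\sqrt n$, up to a failure probability $Ce^{-c\sqrt n}$. After a bounded-in-$d$ number of applications this count is forced below $1$, hence equals $1$, and then every vertex of $B_n$ lies in a single cluster. The one point requiring care is that at each step we must feed into \cref{lemma: smaller than sqrtn} the graph produced by the previous steps, which is itself a function of the box percolation; this is legitimate because the successive annuli are separated by buffers of side $2k$, so the corresponding box percolations are independent, exactly as recorded after \cref{lem: first bt lemma}.

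\textbf{Set-up.} Fix $\eps$ and $k\ge 2$, put $R=8dn$, and set $r_j=(2j+1)n$ for $0\le j\le T$, where $T=4d-2$ is the largest integer with $r_T\le R-2n$. Let $\Omega_j=\percok(\eps)\cap A_{r_j+2k,\,r_j+2n}$ for $0\le j\le T-1$; these annuli are pairwise disjoint with a buffer of side $2k$ between consecutive ones, so $\Omega_0,\dots,\Omega_{T-1}$ are mutually independent. Write $\Lambda_T=\Lambda$ and $\Lambda_j=\Lambda\cup\bigcup_{i=j}^{T-1}\Omega_i$, so that $\Lambda_j=\Lambda_{j+1}\cup\Omega_j$; each $\Lambda_j$ is everywhere percolating (it contains $\Lambda$), and $\Omega_j$ is independent of $\Lambda_{j+1}$ since its support lies in an annulus disjoint from, and buffered away from, those defining $\Lambda_{j+1}$.

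\textbf{The iteration.} Run $j$ from $T$ down to $0$. Deterministically $U_{0,r_T}(\Lambda_T)\le|B_{r_T}|\le(17dn)^{d}$, and $U_{0,r_j}(\Lambda_j)\ge1$ for all $j$ since the origin lies in an infinite cluster of $\Lambda$. For the step $j+1\to j$, condition on $\Omega_{j+1},\dots,\Omega_{T-1}$: then $\Lambda_{j+1}$ is a fixed everywhere percolating graph and $\Omega_j$ is a fresh box percolation on $A_{r_j+2k,\,r_j+2n}$ independent of it, so \cref{lemma: smaller than sqrtn}, applied with its $\Lambda$ equal to $\Lambda_{j+1}$ and its $m$ equal to $r_j$ (note $n\le r_j$ and $r_j+2n=r_{j+1}\le R$), gives that with conditional, hence unconditional, probability at least $1-Ce^{-c\sqrt n}$,
\[
U_{0,r_j}(\Lambda_j)=U_{0,r_j}\bigl(\Lambda_{j+1}\cup\Omega_j\bigr)\le\max\Bigl\{\tfrac1{\sqrt n}\,U_{0,r_{j+1}}(\Lambda_{j+1}),\,1\Bigr\}.
\]
Intersecting these $T$ events (a constant number, so the union bound only changes the constant in front of $e^{-c\sqrt n}$) and chaining the inequalities by downward induction yields $U_{0,n}(\Lambda_0)\le\max\{(17dn)^{d}/n^{T/2},1\}$ on the resulting high-probability event. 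Since $d\ge3$ we have $T/2=2d-1>d$, so $(17dn)^{d}/n^{T/2}=(17d)^{d}n^{1-d}\to0$; taking $N=N(\eps,k,d)$ larger than all the thresholds $n_0$ of \cref{lem: first bt lemma,lem: add perco to sqrtn,lemma: smaller than sqrtn} and large enough to make this $<1$, we get $U_{0,n}(\Lambda_0)=1$, i.e. every pair $x,y\in B_n$ is connected in $\Lambda\cup\percok(\eps)$. Finally, since every added edge $\Omega_i$ lies in $B_R$ and, as \cref{lem: add perco to sqrtn} is in fact formulated for the restriction $X=\Lambda\cap B_R$, the whole iteration may be run with $X$ in place of $\Lambda$, so the connecting path can be taken inside $B_R=B_{8dn}$; this is \eqref{eqn:connection very likely}, the value of the constant multiplying $n$ in the outer box being immaterial to the renormalisation step of \cite[Lemma 1.1]{BenTass}.

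\textbf{Main obstacle.} There is in fact no serious obstacle remaining at this stage: the genuinely new difficulties in porting the argument of \cite{BenTass} to box percolation — that the sprinkling cannot be split into independent Bernoulli copies, and that one must show the boundary of a spanning union of clusters carries $\Omega(\sqrt n)$ independent coordinates of the percolation, via the ``many distinct $Q_k^z$'' count inside \cref{cl: sprinkling} — have already been absorbed into \cref{lem: first bt lemma,lem: add perco to sqrtn,lemma: smaller than sqrtn}. What is left is organisational: checking that feeding $\Lambda_{j+1}$ (a function of the box percolation) into \cref{lemma: smaller than sqrtn} is valid, which is exactly what the disjoint buffered annuli of the set-up guarantee, and verifying the arithmetic $T/2>d$, which is precisely where the hypothesis $d\ge3$ enters.
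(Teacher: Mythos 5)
Your argument is in substance the paper's own proof: iterate \cref{lemma: smaller than sqrtn} over a constant number of disjoint annuli separated by $2k$-buffers (so that the successive box percolations are independent and the lemma can be applied conditionally, its constants being uniform in the everywhere percolating graph), start from a polynomial bound on the number of clusters meeting the box, and take a union bound over the constantly many steps. The chaining of the inequalities, the justification of the conditional applications, and the arithmetic $(17dn)^d/n^{T/2}\to 0$ are all fine.

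The one discrepancy is the containment box. The statement requires the connecting paths to lie inside $B_{2n}$, whereas your annuli $A_{r_j+2k,\,r_j+2n}$ reach out to $B_{8dn}$, so what you actually prove is that all of $B_n$ is connected inside $B_{8dn}$. The paper gets $B_{2n}$ verbatim by applying \cref{lemma: smaller than sqrtn} at scale $n'/4d$ rather than $n'$, so that its $2d$ annuli of width $n'/2d$ all fit inside $A_{n',2n'}$; this is precisely what the constraint $m+2n\le 8dn$ in \cref{lemma: smaller than sqrtn} and its predecessors was engineered for. Your closing remark that the constant in the outer box is immaterial is correct for the downstream renormalisation (the field $X^{\Lambda}_s$ then has a larger but still finite dependence range, and \cite[Corollary 1.4]{LSS1997} still applies), but as written you have proved a variant of the lemma rather than the lemma itself; to obtain the stated form, rerun your iteration with the sub-lemma parameter $n$ replaced by $n/4d$, which is exactly the paper's choice. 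A minor side point: $T/2=2d-1>d$ already holds for $d\ge 2$, so this is not where $d\ge 3$ enters; that hypothesis is used for the transience step via \cite{GKZ1993}, not in this lemma.
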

\begin{proof}
We write $n'$ in place of $n$ in the proof to make the use of \cref{lemma: smaller than sqrtn} smoother. We assume that $n'$ is large enough that the results of all the previous lemmas apply to $n=\frac{n'}{4d}$, and look at the restriction of $\Lambda$ to $B_{2n'}$. 	
Similarly to \cite{BenTass}, we consider the disjoint annuli $C_\ell = A_{n'+(\ell-1)(n'/2d) + 2k,n'+\ell n'/2d}$. Successively applying \cref{lemma: smaller than sqrtn} (with $m = 2n' - \ell	n'/2d$ and $n = \frac{n'}{4d}$) for $\ell = 2d,2d-1,\ldots,1$, we obtain that 
\begin{equation*}
	U_{0, 2n' - \ell	n'/2d}(\Lambda \cup \left(\percok(\eps)\cap (\cup_{i=2d-\ell}^{2d}C_i)\right)) \leq \max\left\{\frac{U_{0,2n'-(\ell-1)n'/2d}(\Lambda \cup \left(\percok(\eps)\cap (\cup_{i=2d-\ell+1}^{2d}C_i)\right)}{\sqrt{n'}}, 1\right\}
\end{equation*}
with probability at least $1-2dC\exp(-c\sqrt{n'})$, using the fact that the $C_i$ are separated by buffers of length $2k$.  %
However, provided that $n'$ is large enough we have that $U_{0,2n'}(\Lambda) \leq (n')^d$, and hence after applying the lemma $2d$ times we obtain that $U_{0,2n'}\left(\Lambda \cup \left(\percok(\eps) \cap \left(\cup_{i=1}^{2d}C_i\right)\right)\right) = 1$ with the desired probability.
This means that every vertex in $B_{n'}$ is in the same connected component of $\Lambda\cup\left(\percok(\eps) \cap A(n',2n')\right)$ inside $B_{2n'}$, as required.
\end{proof}

\bibliographystyle{abbrv}

\end{document}